\newtheorem{thm}{Theorem}
\newtheorem{lem}[thm]{Lemma}
\newtheorem{prop}[thm]{Proposition}
\theoremstyle{definition}
\newtheorem{defn}[thm]{Definition}
\newtheorem{rmk}[thm]{Remark}
\newtheorem{exmp}[thm]{Example}
\newcommand{\CPb}{\overline{\mathbb{CP}}{}^{2}}
\newcommand{\CP}{{\mathbb{CP}}{}^{2}}
\title[Lantern substitution and new symplectic 4-manifolds with ${b_{2}}^{+} = 3$]
{Lantern substitution and new\\ 
symplectic 4-manifolds with ${b_{2}}^{+} = 3$} 
\begin{document}

\author{Anar Akhmedov}
\address{School of Mathematics \\
University of Minnesota \\
Minneapolis, MN, 55455, USA}
\email{akhmedov@math.umn.edu}

\author{Jun-Yong Park}
\address{School of Mathematics \\ 
University of Minnesota \\
Minneapolis, MN, 55455, USA}
\email{junepark@math.umn.edu}

\date{November 11, 2011.  Revised on September 7, 2012}

\subjclass[2000]{Primary 57R55; Secondary 57R17}

\keywords{4-manifold, mapping class group, Lefschetz fibration, lantern relation, rational blowdown}

\begin{abstract} Motivated by the construction of H. Endo and Y. Gurtas, changing a positive relator in Dehn twist generators of the mapping class group by using lantern substitutions, we show that $4$-manifold $K3\#2\CPb$ equipped with the genus two Lefschetz fibration can be rationally blown down along six disjoint copies of the configuration $C_2$. We compute the Seiberg-Witten invariants of the resulting symplectic $4$-manifold, and show that it is symplectically minimal. Using our example, we also construct an infinite family of pairwise non-diffeomorphic irreducible symplectic and non-symplectic $4$-manifolds homeomorphic to $M = 3\CP\# (19-k)\CPb$ for $1 \leq k \leq 4$. 

\end{abstract}

\maketitle

\section{Introduction} 

\par There is a beautiful interplay between the algebra and the topology when one studies a Lefschetz fibration structure on a smooth $4$-manifold. To every Lefschetz fibration over $\mathbb{S}^2$, one can associate a $\emph{factorization}$ of the identity element as a product of positive Dehn twists in the mapping class group of the fiber, and conversely, for such a factorization in the mapping class group, there is a corresponding Lefschetz fibration over $\mathbb{S}^2$ \cite{GS}. By the remarkable work of Donaldson \cite{D1, D2}, every closed symplectic $4$-manifold admits a structure of Lefschetz pencil, which can be blown up at its base points to yield a Lefschetz fibration. Conversely, Gompf \cite{GS} showed that if $g \geq 2$, then the total space of a genus $g$ Lefschetz fibration admits a symplectic structure. As one changes the identity word by conjugations and relations of the mapping class group, the corresponding Lefschetz fibration also changes topologically. There are many efforts in trying to understand what all the relations in the mapping class groups mean topologically. One of the well understood relations is the lantern relation, which corresponds to the symplectic operation of rational blowdown \cite{FS1, EG}.

\par  In this article, we start with the genus two Lefschetz fibration on $K3\#2\CPb$ over $\mathbb{S}^2$ with global monodromy given by the relation $\varrho = (t_{c_5}t_{c_4}t_{c_3}t_{c_2}t_{c_1})^6=1$ in the mapping class group  $M_{2}$ of a closed genus two surface, where each $c_i$ is a simple closed curve as in Figure ~ \ref{fig:Sigma2} and $t_{c_i}$ is a right handed Dehn twist along the curve $c_i$, $i = 1, \cdots, 5$. We factorize the monodromy of the given Lefschetz fibration by a series of conjugations and braid relations to get a word upon which we can perform six lantern relation substitutions. Applying these lantern substitutions change the total space of our Lefschetz fibration topologically as a six rational blowdowns on $K3\#2\CPb$. Furthemore, using the Seiberg-Witten invariants, we show that the resulting symplectic $4$-manifold is homeomorphic but not diffeomorphic to $3\CP\#15\CPb$ and symplectically minimal. We would like to point out that the goal of this paper is not to construct exotic smooth structures on very small $4$-manifolds with ${b_{2}}^{+} = 3$, but rather to use the lantern relation substitutions to study smooth structures on various $4$-manifolds. The study of exotic $4$-manifolds with small Euler characteristics has been carried out by the first author and B. D. Park in \cite{A, AP1, AP2}.

\par In the following three sections, we present some background material and recall some results which will be needed in this paper. Section 2 discusses some well-known relations in the mapping class group, which will be used in our computations and study. In Section 3, we give a brief background information on Lefschetz fibrations, provide some examples that will be used to illustrate discussions in the paper, and prove various results that will be needed in the sequel. In Sections 4 and 5, we recall the rational blowdown technique of Fintushel and Stern, provide the theorem proved by H. Endo and Y. Gurtas relating the lantern substitution to the rational blowdown operation, state results of R. Gompf on rational blowdown along smooth $-4$ sphere, and discuss the knot surgery operation of Fintushel-Stern, respectively. Finally, in section 6, we prove our main theorems. 

\section{Mapping Class Groups} 
\par Let $\Sigma_{g}$ denote a $2$-dimensional, closed, oriented, and connected surface of genus $g>0$ surface. 

\begin{defn}

Let $Diff^{+}\left( \Sigma_{g}\right)$ denote the group of all orientation-preserving diffeomorphisms $\Sigma_{g}\rightarrow \Sigma_{g},$ and $ Diff_{0}^{+}\left(
\Sigma_{g}\right) $ be the subgroup of $Diff^{+}\left(\Sigma_{g}\right) $ consisting of all orientation-preserving diffeomorphisms $\Sigma_{g}\rightarrow \Sigma_{g}$ that are isotopic to the identity. \emph{The mapping class group} $M_{g}$ of $\Sigma_{g}$ is defined to be the group of isotopy classes of orientation-preserving diffeomorphisms of $\Sigma_{g}$, i.e.,
\[
M_{g}=Diff^{+}\left( \Sigma_{g}\right) /Diff_{0}^{+}\left(
\Sigma_{g}\right) .
\]
\end{defn}

\begin{defn}
\par Let $\alpha$ be a simple closed curve on $\Sigma_{g}$. A \emph{right handed Dehn twist} $t_\alpha$ about $\alpha$ is the isotopy class of a self-diffeomophism of $\Sigma_{g}$ obtained by cutting the surface $\Sigma_{g}$ along $\alpha$ and gluing the ends back after rotating one of the ends $2\pi$ to the right. 
\end{defn}

Notice that the conjugate of a Dehn twist is again a Dehn twist. Indeed, if $f: \Sigma_{g}\rightarrow \Sigma_{g}$ is an orientation-preserving diffeomorphism, then it is easy to check that $f \circ t_\alpha \circ f^{-1} = t_{f(\alpha)}$. Next, we briefly mention some relations that hold in the mapping class group $M_{g}$.
This elementary fact and relations will be used quite often in our computation in Section 6.

\subsection{Commutativity and Braid Relation} Let $\alpha$ and $\beta$ be two simple closed curves on $\Sigma_{g}$.

\begin{lem} \label{com&braid.lem}
If $\alpha$ and $\beta$ are disjoint, then we have the following commutativity relation: $t_{\alpha }t_{\beta}=t_{\beta }t_{\alpha }.$
If $\alpha$ and $\beta$ transversely intersect at a single point, then the corresponding Dehn twists satisfy the following braid relation: $t_{\alpha }t_{\beta }t_{\alpha }=t_{\beta }t_{\alpha }t_{\beta }.$
\end{lem}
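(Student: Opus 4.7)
The plan is to reduce both assertions to local statements in a regular neighborhood of $\alpha\cup\beta$ and verify them there.

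For commutativity, since $\alpha\cap\beta=\emptyset$ I would choose disjoint annular neighborhoods $A_\alpha\supset\alpha$ and $A_\beta\supset\beta$. By the definition of a Dehn twist, $t_\alpha$ and $t_\beta$ admit representative diffeomorphisms $\varphi_\alpha,\varphi_\beta$ of $\Sigma_g$ that are the identity outside $A_\alpha$ and $A_\beta$ respectively. Two diffeomorphisms with disjoint supports commute pointwise on $\Sigma_g$, so the two orderings yield the same mapping class and $t_\alpha t_\beta=t_\beta t_\alpha$ in $M_g$.

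For the braid relation, the key tool is the conjugation identity $f\circ t_\gamma\circ f^{-1}=t_{f(\gamma)}$ recorded just before the lemma. Multiplying the desired equality $t_\alpha t_\beta t_\alpha=t_\beta t_\alpha t_\beta$ on the right by $(t_\alpha t_\beta)^{-1}$ converts it to $(t_\alpha t_\beta)\,t_\alpha\,(t_\alpha t_\beta)^{-1}=t_\beta$, and the conjugation formula then identifies the left side with $t_{(t_\alpha t_\beta)(\alpha)}$. Since Dehn twists about non-isotopic essential simple closed curves are distinct in $M_g$, the braid relation is equivalent to the claim that $(t_\alpha t_\beta)(\alpha)$ is isotopic to $\beta$ on $\Sigma_g$. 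Because $|\alpha\cap\beta|=1$, a regular neighborhood $N$ of $\alpha\cup\beta$ is a once-punctured torus and both twists can be arranged to be supported in $N$, so this isotopy can be checked entirely inside $N$.

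The principal obstacle is therefore this final geometric verification that $(t_\alpha t_\beta)(\alpha)\simeq\beta$ in the once-punctured torus $N$. The cleanest route is pictorial: in $N$, the curve $t_\beta(\alpha)$ winds once around $\beta$ as it traverses the annular support of $t_\beta$, and applying $t_\alpha$ to the resulting curve unwinds it in the $\alpha$-direction to produce a curve plainly isotopic to $\beta$. Alternatively, one may compute the induced map on $H_1(N;\mathbb{Z})\cong\mathbb{Z}^2$ in the basis $\{[\alpha],[\beta]\}$ using the formula $(t_\gamma)_*[\delta]=[\delta]+\hat{\imath}([\delta],[\gamma])[\gamma]$, verify that $(t_\alpha t_\beta)_*[\alpha]=[\beta]$, and invoke the fact that essential simple closed curves in a once-punctured torus are determined up to isotopy by their primitive homology class.
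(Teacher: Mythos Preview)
Your argument is correct. The paper itself does not prove this lemma at all---it simply writes ``For a proof see \cite{I}''---so there is nothing to compare at the level of strategy; you have supplied a full proof where the paper defers to the literature.

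Two small remarks. First, when you write that the braid relation is \emph{equivalent} to $(t_\alpha t_\beta)(\alpha)\simeq\beta$, you are invoking the nontrivial converse direction (that $t_\gamma=t_\delta$ forces $\gamma\simeq\delta$). For the proof you only need the trivial direction: isotopic curves give equal Dehn twists, so it suffices to \emph{exhibit} the isotopy. You do this, so the logic is fine, but the word ``equivalent'' is stronger than required. Second, your homology computation in the once-punctured torus is the standard and cleanest way to finish; the sign in the formula $(t_\gamma)_*[\delta]=[\delta]+\hat{\imath}([\delta],[\gamma])[\gamma]$ depends on orientation conventions, but since you are comparing unoriented isotopy classes via primitive homology classes modulo sign, either convention yields the conclusion.
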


For a proof see \cite{I}.

\subsection{Lantern Relation} Let $\Sigma_{0,4}$ be a sphere with 4 boundary components.

\begin{lem} \label{lantern.lem}
If $\delta_1, \delta_2, \delta_3, \delta_4$ are the boundary curves of $\Sigma_{0,4}$ and $\alpha$, $\beta$, $\gamma$ are 
the simple closed curves as shown in Figure~\ref{fig:lan},
then we have \[t_{\gamma }t_{\beta }t_{\alpha
}=t_{\delta_{1}}t_{\delta_{2}}t_{\delta_{3}}t_{\delta_{4}},
\] where $t_{\delta_{i}},$ $1\leq i\leq 4,$ denote the Dehn twists about $\delta_{i}.$
\end{lem}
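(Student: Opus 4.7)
The plan is to invoke the Alexander method: an orientation-preserving self-homeomorphism $\phi$ of a compact surface $S$ which fixes $\partial S$ pointwise is determined up to isotopy rel $\partial S$ by its induced action, up to isotopy rel endpoints, on any filling collection of disjoint properly embedded arcs. Applied to $\Sigma_{0,4}$, this reduces the lantern relation to a purely combinatorial identity of arc images, so that no explicit diffeomorphism on the 4-holed sphere has to be written down.

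Concretely, I would fix a model of $\Sigma_{0,4}$ as $S^{2}$ with four open disks $D_{1}, \dots, D_{4}$ removed, with $\delta_{i} = \partial D_{i}$, and with $\alpha$, $\beta$, $\gamma$ arranged as in Figure~\ref{fig:lan}. I would then choose three pairwise disjoint properly embedded arcs $a, b, c$, say with $a$ joining $\delta_{1}$ to $\delta_{2}$, $b$ joining $\delta_{2}$ to $\delta_{3}$, and $c$ joining $\delta_{3}$ to $\delta_{4}$, arranged so that cutting $\Sigma_{0,4}$ along $a \cup b \cup c$ yields a single disk; this is a filling cut system of the type required by the Alexander method. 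I would next compute the image of each of $a, b, c$ under both sides. For $t_{\delta_{1}} t_{\delta_{2}} t_{\delta_{3}} t_{\delta_{4}}$ the four boundary Dehn twists commute (Lemma~\ref{com&braid.lem}) and are supported in disjoint collars, so each arc endpoint simply picks up one positive twist around the corresponding $\delta_{i}$. For $t_{\gamma} t_{\beta} t_{\alpha}$, I would apply the twists one at a time using the local model of a Dehn twist, which replaces each transverse crossing of an arc with the twisting curve by a positive detour along that curve, redrawing and isotoping the arcs after each step.

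The main obstacle will be the bookkeeping on the left-hand side, where $\alpha, \beta, \gamma$ pairwise intersect and the corresponding Dehn twists do not commute: after applying $t_{\alpha}$ the arcs must be carefully redrawn before computing $t_{\beta}$, and again before $t_{\gamma}$, with many inessential bigons appearing along the way that must be removed via isotopy rel endpoints. Once this case-by-case picture analysis is completed, the resulting arc images on the two sides will coincide up to isotopy rel $\partial \Sigma_{0,4}$, and the Alexander method upgrades this arc-level identity to the equality of mapping classes $t_{\gamma} t_{\beta} t_{\alpha} = t_{\delta_{1}} t_{\delta_{2}} t_{\delta_{3}} t_{\delta_{4}}$ in $M(\Sigma_{0,4})$. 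Since the right-hand side is invariant under any reordering of its factors (disjointness of the $\delta_{i}$'s plus Lemma~\ref{com&braid.lem}), the statement as displayed is insensitive to the indexing of the boundary components.
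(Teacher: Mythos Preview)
Your outline is correct: the Alexander method reduces equality in $M(\Sigma_{0,4})$ to checking the action on a filling system of three properly embedded arcs cutting $\Sigma_{0,4}$ into a disk, and the computation you describe (boundary twists merely wind each endpoint once, while $t_\gamma t_\beta t_\alpha$ must be traced through three successive twist-and-isotope steps) is exactly the standard verification. The only caveat is that the bookkeeping you flag as ``the main obstacle'' is the entire content of the proof, so in a full write-up the arc images after each of $t_\alpha$, $t_\beta$, $t_\gamma$ would need to be drawn or encoded explicitly; your sketch stops short of doing that, but the strategy is sound.

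By comparison, the paper does not supply a proof at all: it simply cites Ivanov's survey \cite{I} (``For a proof see \cite{I}''). What you have written is essentially the argument one finds in such references (and in the Farb--Margalit textbook), so while your approach is genuinely more informative than the paper's citation, it is not a novel route --- it \emph{is} the route the citation points to. The advantage of your version is self-containment; the paper's choice buys brevity and defers a well-known identity to the literature.
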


For a proof see \cite{I}.

\begin{figure}[ht]
\begin{center}
\includegraphics[scale=.43]{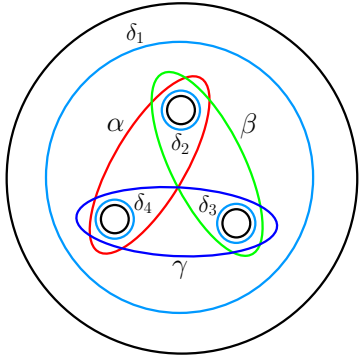}
\caption{Lantern Relation}
\label{fig:lan}
\end{center}
\end{figure}

This relation was known to Dehn. Later on it was rediscovered by D. Johnson and named as \emph{lantern relation} by him \cite{I}. For more results on lantern relation see \cite{Mar}.

\section{Lefschetz Fibration}

Let us first recall the definition of Lefschetz fibration. 

\begin{defn}\label{defn:lef.fibration} Let $X$ be a compact, connected, oriented, smooth $4$-manifold. A \emph{Lefschetz fibration} on $X$ is a smooth map $f : X \longrightarrow \Sigma_{h}$,
where $\Sigma_{h}$ is a compact, oriented, smooth $2$-manifold of genus $h$, such that $f$ is surjective and each critical point of $f$ has an orientation preserving chart on which $f: \mathbb{C}^2 \longrightarrow \mathbb{C}$ is given by $f(z_1, z_2) = {z_1}^2 + {z_2}^2$.
\end{defn}

It is a corollary of the Sard's theorem that $f$ is a smooth fiber bundle away from finitely many critical points, say $p_1, \cdots, p_k$. The genus of the regular fiber of $f$ is defined to be the genus of the Lefschetz fibration.  If a fiber passes through critical point set $p_{1}, \cdots, p_{k}$, then it is called a singular fiber which is an immersed surface with a single transverse self-intersection. A singular fiber can be described by its monodromy, an element of the mapping class group $M_{g}$, where $g$ is the genus of the Lefschetz fibration. This element is a right handed Dehn twist along a simple closed curve on $\Sigma$, called the \emph {vanishing cycle}. If this curve is a nonseparating curve, then the singular fiber is called \emph{nonseparating}, otherwise it is called \emph{separating}. For a genus $g$ Lefschetz fibration over $S^2$, the product of right handed Dehn twists $t_{\alpha_{i}}$ along the vanishing cycles $\alpha_i$, for $i = 1, \cdots , k$, gives us the global monodromy of the Lefschetz fibration, the relation $t_{\alpha_1} \cdot t_{\alpha_2} \cdots  t_{\alpha_k} = 1$ in $M_{g}$. Conversely, such a relation defines a genus $g$ Lefschetz fibration over $S^2$ with the vanishing cycles $\alpha_1, \cdots, \alpha_k$.

Let $c_{1}$, $c_{2}$, $c_{3}$, $c_{4}$, and $c_{5}$ be the simple closed curves as in Figure~\ref{fig:Sigma2}. For convenience we shall denote the right handed Dehn twists $t_{c_i}$ along the curve $c_i$ by $c_{i}$. It is well-known that the following relations hold in the mapping class group $M_2$:  

\begin{equation}
\begin{array}{l}
(c_1c_2c_3c_4{c_5}^2c_4c_3c_2c_1)^2 = 1,  \\
(c_1c_2c_3c_4c_5)^6 = 1,  \\ 
(c_1c_2c_3c_4)^{10} = 1. 
\end{array}
\end{equation}

For each relation above, we have a corresponding genus two Lefschetz fibrations over $S^2$ with total spaces $\CP\#13\CPb$, $K3\#2\CPb$, and the Horikawa surface $H$, respectively. In this paper, we will consider genus two Lefschetz fibration on $K3\#2\CPb$ with global monodromy $(c_1c_2c_3c_4c_5)^6 = 1$. 

The following result is well-known. For the convenience of the reader, we sketch the proof.

\begin{lem}\label{E} The genus two Lefschetz fbration on $K3\#2\CPb$ over $S^2$ with the monodromy factorization $(c_1c_2c_3c_4c_5)^6 = 1$ can be obtained as the double branched covering of $\CP\#\CPb$ branched along a smooth algebraic curve $B$ in the linear system $|6\tilde{L}|$, where $\tilde{L}$ is the proper transform of line $L$ in $\CP$ avoiding the blown-up point.
\end{lem}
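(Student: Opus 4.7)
The plan is to realize $K3\#2\CPb$ as a double branched cover of $\CP\#\CPb$ and show that the pencil of lines through the blown-up point pulls back to the desired genus two Lefschetz fibration. I start by blowing up $\CP$ at a point $p$ to get $\CP\#\CPb$, together with the natural projection $\pi \colon \CP\#\CPb \to S^2$ whose fibers are the proper transforms of lines through $p$; these lie in the class $\tilde{L} - E$ (where $E$ is the exceptional divisor), hence are smooth rational curves of self-intersection zero. I pick a generic smooth sextic $B \subset \CP$ that misses $p$; viewed on the blowup it sits in $|6\tilde{L}|$ and is disjoint from $E$. Let $\varphi \colon X \to \CP\#\CPb$ be the double cover branched along $B$.

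Adjunction gives $K_X = \varphi^{*}(K_{\CP\#\CPb} + 3\tilde{L}) = \varphi^{*}(E)$, and because $E$ is disjoint from the branch locus, $\varphi^{-1}(E)$ is a disjoint union of two smooth $(-1)$-spheres. Contracting them exhibits $X$ as the blowup at two points of the double cover of $\CP$ branched along $B$, which is a simply connected surface with trivial canonical class, hence a K3 surface; thus $X \cong K3\#2\CPb$. Next, the composition $f = \pi \circ \varphi \colon X \to S^2$ is a smooth map whose generic fiber is the double cover of a $\mathbb{CP}^1$ branched at six points (the intersection of a generic line through $p$ with $B$), a genus two curve by Riemann-Hurwitz. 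Critical points occur precisely over the lines through $p$ that are simply tangent to $B$; in local coordinates in which $B$ is $y = x^2$ and $y$ parametrizes the pencil, the cover has equation $z^2 = y - x^2$, i.e.\ $y = z^2 + x^2$, which is the standard Lefschetz model. The class formula for a smooth plane curve of degree $d$ yields $d(d-1) = 30$ tangent lines through a generic point, so $f$ is a genus two Lefschetz fibration with $30$ nonseparating vanishing cycles (simple tangency gives a transposition in the $S_6$-monodromy of the branched covering, hence a nonseparating cycle upstairs).

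It remains to identify the monodromy factorization as $(c_1c_2c_3c_4c_5)^6 = 1$. The hyperelliptic involution on the generic fiber coming from $\varphi$ commutes with the monodromy, so the monodromy lies in the hyperelliptic mapping class group, which equals $M_2$. Under the standard surjection $B_6(S^2) \twoheadrightarrow M_2$ from the spherical braid group on six strands, each elementary half-twist $\sigma_i$ lifts to the Dehn twist $c_i$ along the preimage of an arc joining the $i$-th and $(i+1)$-st branch points, which is precisely the curve of Figure~\ref{fig:Sigma2}. The total braid monodromy of the pencil, computed as the ordered product over the $30$ tangencies of $B$ with lines through $p$, is the central element $\Delta_6^{2} = (\sigma_1\sigma_2\sigma_3\sigma_4\sigma_5)^6$ of $B_6(S^2)$, and it lifts to the relation $(c_1c_2c_3c_4c_5)^6 = 1$ in $M_2$. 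The delicate point is this last step: identifying the specific vanishing cycles with the $c_i$ of Figure~\ref{fig:Sigma2} and arranging the half-twists in the cyclic order dictated by a generic pencil requires the classical braid monodromy computation going back to Moishezon and Teicher, and is where I would expect to spend the most care.
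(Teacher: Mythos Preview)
Your argument is correct and follows essentially the same route as the paper: realize the total space as the double cover of $\CP\#\CPb$ branched along a smooth sextic, read off the genus two fibration from the pencil of lines, and obtain the monodromy factorization by lifting the Moishezon--Teicher braid factorization $\Delta^2=(\sigma_1\cdots\sigma_5)^6$ to $M_2$. You supply more detail than the paper on the identification $X\cong K3\#2\CPb$, the count of $30$ singular fibers, and the local Lefschetz model; the paper, for its part, makes explicit the degeneration of the sextic into six generic lines that underlies the braid computation you cite.
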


\begin{proof} We closely follow the proof of Lemma 3.1 in \cite{Ar1}, see also the discussion in \cite{AK}. Let $D$ denote a degree $d$ algebraic curve in $\CP$. Fix a generic projection $\pi : \CP \setminus {pt} \rightarrow \mathbb{CP}^1$ whose pole does not belong to $D$. According to the work of B. Moishezon and M. Teicher, the braid monodromy describing a degree $d$ branch curve $D$ in $\CP$ is given by a braid factorization. In fact, it is shown that the braid monodromy around the point at infinity in $\mathbb{CP}^1$, which is given by the central element $\Delta^2$ in $B_{d}$, can be written as the product of the monodromies about the critical points of the projection map $\pi$. More precisely, the following factorization $\Delta^2 = (\sigma_{1} \cdots \sigma_{d-1})^{d}$ holds in the braid group $B_{d}$, where $\sigma_{i}$ is a positive half-twist exchanging two points, and fixing the remaining $d-2$ points.

We first degenerate the smooth algebraic curve $B$ in $\CP\#\CPb$ into a union of $6$ lines in a general position (see Figure~\ref{fig:branch}). The braid group factorization corresponding to the configuration $B$ is given by $\Delta^2 = (\sigma_{1} \sigma_{2} \sigma_{3} \sigma_{4} \sigma_{5})^{6}$. By lifting this braid factorization to the mapping class group of the genus two surface, we obtain that the monodromy factorization $(c_1c_2c_3c_4c_5)^6 = 1$ for the corresponding double branched covering.

Notice that a regular fiber of the given fibration is a two fold cover of a sphere with homology class $f = h - e_{1}$  branched over $6$ points, where $h$ denotes the hyperplane class in $\CP$. Thus a regular fiber is a surface of genus two. The exceptional sphere $e_{1}$ in $\CP\#\CPb$, which intersects $f = h - e_{1}$ positively at one point, gives rise to two disjoint $-1$ sphere sections to the given genus two Lefschetz fibration on $K3\#2\CPb$.

\end{proof}

\begin{figure}[ht]
\begin{center}
\includegraphics[scale=.3]{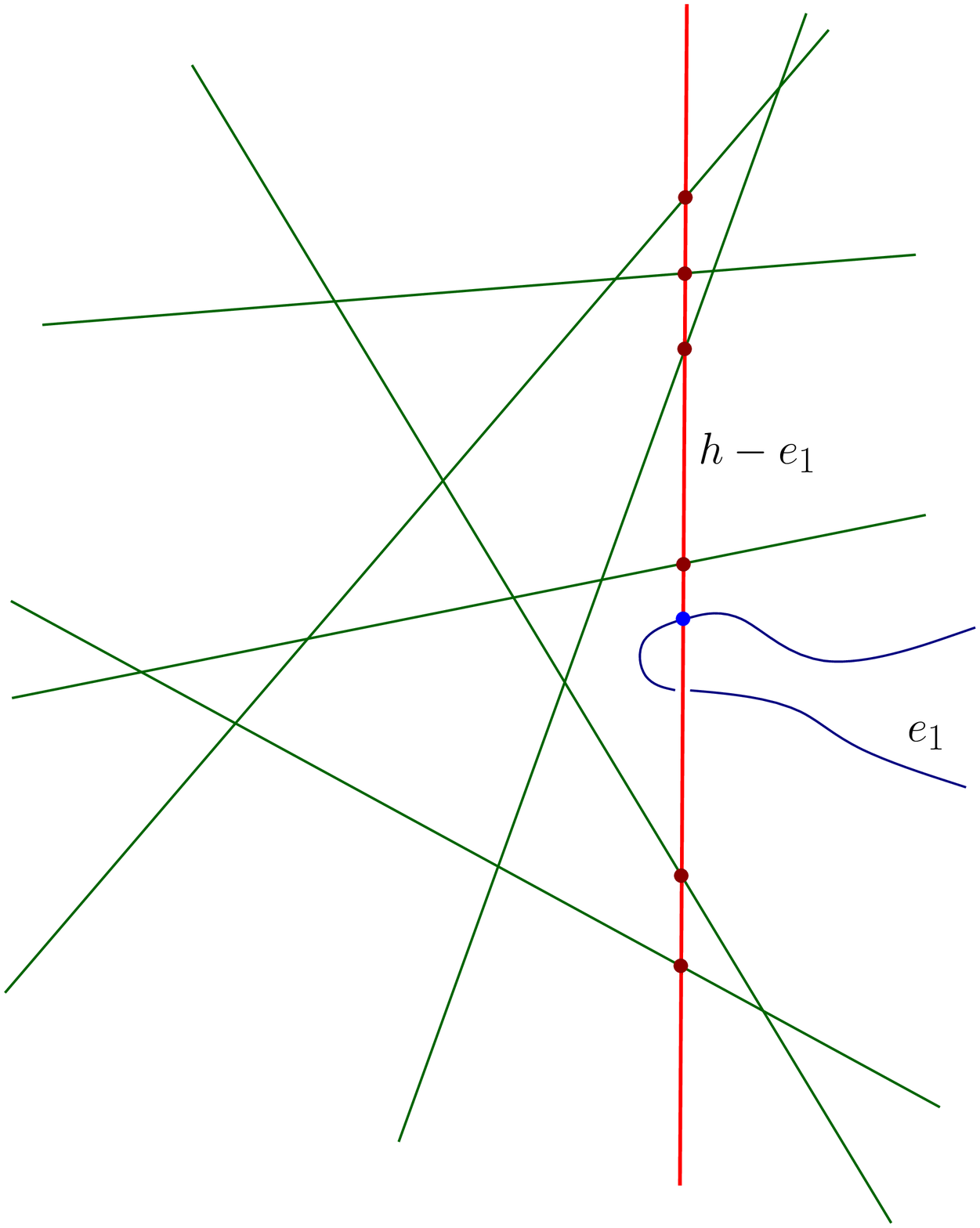}
\caption{Branch Locus for $K3\#2\CPb$}
\label{fig:branch}
\end{center}
\end{figure}

Below we prove key Proposition, which plays an important role in the proof of our main Theorems. The following proposition also gives an alternative and convenient way of thinking the genus two Lefschetz fibration on $K3\#2\CPb$ over $S^2$ with the monodromy $(c_1c_2c_3c_4c_5)^6 = 1$.

\begin{prop}\label{E1} The genus two Lefschetz pencil on $K3$ with two base points given above can be constructed by symplectic fiber summing two copies of $E(1) = \CP\#9\CPb$ along a regular torus fiber. 
\end{prop}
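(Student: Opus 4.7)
The plan is to combine the double-branched-cover description in Lemma~\ref{E} with the classical identity $K3=E(2)=E(1)\#_{F}E(1)$, relating the two presentations via a degeneration of the branch sextic to a union of two cubics.

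By Lemma~\ref{E}, the genus-$2$ pencil on $K3$ is obtained as the double cover of $\CP$ branched over a smooth sextic $B\in|6L|$, with pencil induced by the pencil of lines through a chosen point $p\in\CP\setminus B$; the two base points in $K3$ are then the two preimages of $p$. I would first degenerate $B$, within the linear system $|6L|$, to the reducible divisor $C_1+C_2$, where $C_1,C_2$ are two smooth cubics meeting transversely at nine points $q_1,\dots,q_9$. The pencil $\{tC_1+sC_2\}_{[t:s]\in\mathbb{CP}^1}$ has base locus $\{q_1,\dots,q_9\}$, and blowing up these nine points produces $E(1)=\CP\#9\CPb$ together with its elliptic fibration $\pi:E(1)\to\mathbb{CP}^1$, where the proper transforms $\tilde C_1,\tilde C_2$ become two disjoint smooth elliptic fibers of $\pi$.

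Next I would identify the desingularized double cover of $\CP$ branched along $C_1+C_2$ with the double cover of $E(1)$ branched along $\tilde C_1\cup\tilde C_2$. The latter is the pullback of $\pi$ along the degree-$2$ cover $\mathbb{CP}^1\to\mathbb{CP}^1$ branched at $\pi(\tilde C_1),\pi(\tilde C_2)$, yielding the elliptic surface $E(2)=K3$. Cutting the base $\mathbb{CP}^1$ of this elliptic fibration along a loop separating the two branch points exhibits the double cover as the symplectic fiber sum $E(1)\#_{F}E(1)$ along a regular torus fiber $F$. The pencil of lines through $p$ lifts to a pencil of rational curves on $E(1)$ with base point at $p$, and pulls back through the double cover to a pencil on $K3$: each generic line meets $C_1\cup C_2$ in $3+3=6$ points, so the corresponding fiber in $K3$ is a double cover of $\mathbb{CP}^1$ branched over $6$ points, a smooth genus-$2$ curve, with the two preimages of $p$ serving as the two base points.

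The main technical obstacle will be to justify that the degeneration $B\rightsquigarrow C_1+C_2$ and the accompanying desingularization preserve the symplectic deformation class of the pencil, so that the pencil one recovers after deforming back to a generic smooth sextic coincides with the genus-$2$ Lefschetz pencil of Lemma~\ref{E}.
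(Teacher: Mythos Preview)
Your argument is sound and reaches the same conclusion, but the route differs from the paper's primary proof. The paper works \emph{constructively}: it takes the pencil of lines through a point $p_i$ in each copy of $\CP$ (blown up to $E(1)$), observes that a generic line meets the regular elliptic fiber $F$ in three points, and then glues the two families of thrice-punctured spheres across $\partial(E(1)\setminus\nu(F))$ using an explicit diffeomorphism $\psi=\mathrm{id}_F\times(\text{complex conjugation})$ to produce a holomorphic genus-two pencil on $E(2)$; the identification with the fibration of Lemma~\ref{E} is then made via Siebert--Tian or by computing the braid monodromy. Your approach is \emph{degenerative}: you start from the branched-cover description of Lemma~\ref{E} and specialize the smooth sextic to $C_1+C_2$, then recognize the resolved double cover as $E(1)\#_F E(1)$. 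Interestingly, the paper's proof closes with a second argument that is essentially yours---it views the fiber sum as the double cover of $E(1)$ branched along $2F_{E(1)}$ and lifts the pencil of lines---so you have independently found that alternative.

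Regarding the technical obstacle you flag: the paper resolves it not by tracking a smooth deformation of the sextic, but by computing the braid monodromy of the ramification divisor directly (degenerating the degree-six curve to six lines and reading off $(c_1c_2c_3c_4c_5)^6$), which bypasses any delicate deformation argument. You could adopt the same device. What the paper's constructive approach buys, and yours does not supply directly, is the explicit gluing picture (Figures~\ref{fig:braided},~\ref{fig:genustwopencil}): this is used later (Theorem~\ref{theorem2} and Example~\ref{Ex}) to locate rim tori away from the singular fibers, so if you proceed with your version you should note that this geometric information must be recovered separately.
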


\begin{proof} Since the fiber of the elliptic fibration on $E(1) = \CP\#9\CPb$ is a blow up of a generic cubic curve in $\CP$, its homology class is equal to $f = 3h - e_{1}  -  e_{2} - \cdots  -  e_{9}$, where $h$ denotes the hyperplane class in  $\CP$ and $e_{i}$ is the homology class of the exceptional sphere of the $i^{th}$ blow-up. We first consider the pencil of lines in  $\CP$ all passing through the fixed point $p_{1}$ away from the cubic curve $C$ in $\CP$. Observe that each generic line $L$, which is a sphere of self-intersection $1$, in this pencil intersects the cubic curve $C$ at three distinct points by Bezout's Theorem. Furthermore, by applying the Riemann-Hurwitz formula to the restriction of the map $\pi : \CP \setminus {p_{1}} \rightarrow \mathbb{CP}^1$ to $C$, we compute the degree of the ramification divisor $R$: $deg(R) = 2g(C) - 2 - 3(2g(\mathbb{CP}^1) -2) = 6$. Consequently, for a generic smooth cubic $C$, there are exactly six lines in the pencil above that are tangent to $C$. 

Next, we choose the regular torus fiber $F$ along which the fiber sum of two copies of $E(1) = \CP\#9\CPb$ will be performed. Since the generic elliptic fiber of $E(1) = \CP\#9\CPb$ intersects the lines of the corresponding pencil at three points (see Figure~\ref{fig:genustwopencil}), the generic line of the pencil in each $E(1)$ intersects the boundary of $E(1) \setminus \nu(F)$ in three disjoint circles. We choose a gluing diffeomorphism $\psi = id_{F} \times (complex \ conjugation)$, that identities these circles as in Figure~\ref{fig:braided} to obtain a pencil of genus two curves in  $E(2) = K3$ surface. Since the pencils in each copy of $E(1)$ are holomorphic and the gluing map on the boundary $3$-torus is identity on the elliptic fiber, the resulting genus two pencil is holomorphic as well. By holomorphically blowing up this genus two pencils at base points of the pencil $p_1$ and $p_2$ in $K3$ surface, we obtain the genus two holomorphic fibration on $K3\#2\CPb$. This holomorphic fibration has six singular fibers resulting from the gluing of six tangent lines mentioned above in each copy of $E(1)$. These singular fibers are not a Lefschetz type, and each singularity is topologically a sphere. Furthermore, using the analysis of the singular fibers, we see that each singular fiber can be perturbed into five Lefschetz type singularities with non-separating vanishing cycles (see the braid monodromy discussion in \cite{Ar1}, page 5). Finally, using the result of B. Siebert and G. Tian, Theorem A in \cite{ST}, on holomorphicity of genus two Lefschetz fibration, we see that the fibration is isomorphic to the one given in Lemma \ref{E}. The isomorphism of fibrations also follows from the proof of Lemma \ref{E} (see also discussion in \cite{Ar1}, page 5) by considering the degree, and the braid monodromy of the ramification divisor. First, we view the fiber sum of two copies of $E(1)$ as a two-fold ramified branched cover of $E(1)$ along the smooth divisor in the class $6h - 2e_{1} - 2e_{2} - \cdots -2e_{9} = 2F_{E(1)} = 2(-K_{E(1)})$, twice the anticanonical divisor of $E(1)$. A pencil of lines in $\CP$ with one base point gives rise to a pencil of lines in $E(1)$ with one base point assuming that we blow up the pencil $9$ times away from the basepoint. Since a generic line in the pencil, which has class $h$, intersects the ramification divisor $6h - 2e_{1} - 2e_{2} - \cdots -2e_{9}$ at six points, it determines a genus two pencil under two-fold cover. Notice that the braid monodromy of a smooth plane curve of degree six (i.e., of a ramification divisor), can be computed by degenerating it into the union of six lines in generic position and given by $(c_1c_2c_3c_4c_5)^6 = 1$. 

\end{proof}

\begin{rmk}

This description of the genus two pencil allows us to see rim tori and Gompf nuclei in elliptic surface $K3$ (See Example~\ref{Ex} below and \cite{gompf1} for an explanation). 

\end{rmk}

\begin{figure}[ht]
\begin{center}
\includegraphics[scale=.48]{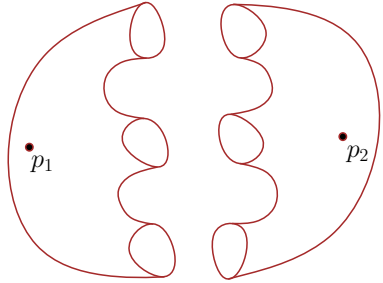}
\caption{A Genus Two Surface via Fiber Sums of $E(1)$}
\label{fig:braided}
\end{center}
\end{figure}

\begin{figure}[ht]
\begin{center}
\includegraphics[scale=.53]{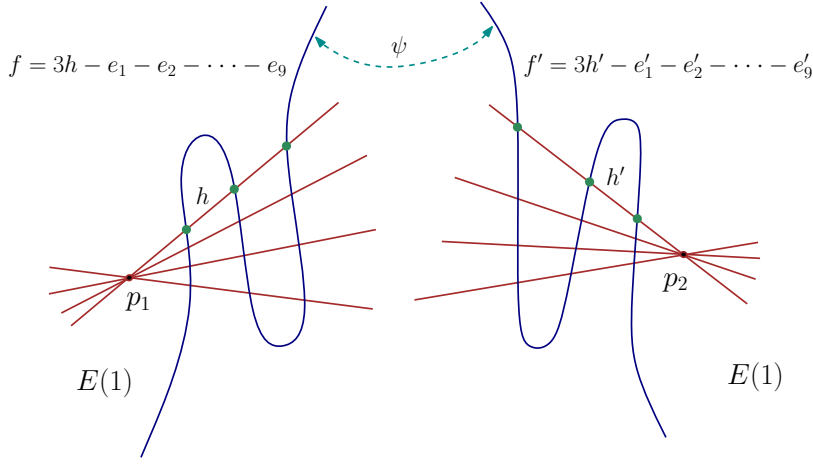}
\caption{The Pencil of Genus Two Curves in $K3$ surface}
\label{fig:genustwopencil}
\end{center}
\end{figure}

\begin{exmp}\label{Ex} In this example, we study $K3$ surfaces $E(2) =  E(1) \#_{\mathbb{T}^2} E(1)$ in some detail. As a consequence of our discussion, we will also derive some useful facts about the genus two Lefschetz pencil on $K3$ with two base points given above. Let us think of $K3$ surface as the fiber sum of two copies of $E(1) = \CP\# 9\CPb$ along a torus fiber as in Proposition~\ref{E1}. We choose the following basis for the intesection form of $E(1)$:  $ <f = 3h - e_1 - ... - e_9, \ e_9, \ e_1 - e_2, \ e_2 - e_3, \ \cdots, \ e_7 - e_8, \ -h + e_6 + e_7 + e_8 >$. Note that the last $8$ classes can be represented by spheres of self-intersection $-2$ and generate the intersection matrix  $-E_8$, where $E_8$ the matrix corresponding to the Dynkin diagram of the exceptional Lie algebra $E_8$. The class $f$ is fiber of an elliptic fibration on $E(1) = \CP\# 9\CPb$ and $e_9$ is a sphere section of self-intersection $-1$. When we perform the fiber sum along torus to obtain $E(2)$, it is not hard to see the surfaces that generate the intersection form $2(-E_8) \oplus 3H$ for $E(2)$, where $H$ is a hyperbolic pair. The two copies of the Milnor fiber $\Phi(1) \in E(1)$ are in $E(2)$, providing $16$ spheres of self-intersection $-2$ (corresponding to the classes $\{ e_1 - e_2, \ e_2 - e_3, \ \cdots, \ e_7 - e_8, \ -h + e_6 + e_7 + e_8 \}$ and $\{ {e_1}' - {e_2}', \ {e_2}' - {e_3}', \ \cdots, \ {e_7}' - {e_8}', \ -{h}' + {e_6}' + {e_7}' + {e_8}' \}$ mentioned above), realize two copies of $-E_8$. One copy of hyperbolic pair $H$ comes from an identification of the torus fibers $f$ and $f'$, and a sphere section $\sigma$ of self-intersection $-2$ obtained by sewing the sphere sections $e_{9}$ and ${e_{9}}'$, i.e. from the Gompf's nucleus $N(2)$ in $E(2)$. The remaining two copies of $H$ come from $2$ rim tori and their dual $-2$ spheres (see discussion in \cite{GS}, page 73)). These $22$ classes ($19$ spheres and $3$ tori) generate $H_2$ of $E(2)$. Since $c_1(E(n)) = (2-n)f$, $E(2)$ has a trivial canonical class. As we can see from the Figure~\ref{fig:genustwopencil}, the class of the genus two surface of square $2$ of the genus two Lefschetz pencil on $K3$ is $h + h'$, we simply add the homology classes of the surfaces. As a consequence, the class of the genus two fiber in $K3\#2\CPb$ is given by $h + h' - E_{1} - E_{2}$, where $E_{1}$ and $E_{2}$ are the homology classes of the exceptional spheres of the blow-ups at the points $p_1$ and $p_2$. We can also verify the symplectic surface $\Sigma$, given by the class $h + h' - E_{1} - E_{2}$, has genus two by applying the adjunction formula to $(K3\#2\CPb, \Sigma)$: $g(\Sigma) = 1 + 1/2(K_{K3\#2\CPb} \cdot [\Sigma] + [\Sigma]^2) = 1 + ((E_{1} + E_{2}) \cdot (h + h' - E_{1} - E_{2}) + (h + h' - E_{1} - E_{2})^2)/2 = 1 + (2 + 0)/2 = 2$. The reader can notice from the intersection form of $K3$ that both rim tori has no intersections with the genus surface in the pencil given by the homology class $h + h'$. Thus, the genus two fiber $\Sigma$ is disjoint from the rim tori that descend to $K3\#2\CPb$.    

\end{exmp}

\section{Rational Blowdown and Lantern Relations}

The basic idea of the rational blowdown surgery is that if a smooth $4$-manifold $X$ contains a particular configuration $C_p$ of transversally intersecting $2$-spheres whose boundary is the lens space $L(p^2, 1 - p)$, then one can replace $C_p$ with rational ball $B_p$ to construct a new manifold $X_p$. If one knows the Seiberg-Witten invariants of the original manifold $X$, then one can determine the same invariants of $X_p$. Below we briefly discuss the rational blow-down. We refer the reader to \cite{FS1,P1} for full details.

Let $p \geq  2$ and $C_p$ be the smooth $4$-manifold obtained by plumbing disk bundles over the $2$-sphere according to the following linear diagram  

 \begin{picture}(100,60)(-90,-25)
 \put(-12,3){\makebox(200,20)[bl]{$-(p+2)$ \hspace{6pt}
                                  $-2$ \hspace{96pt} $-2$}}
 \put(4,-25){\makebox(200,20)[tl]{$u_{p-1}$ \hspace{25pt}
                                  $u_{p-2}$ \hspace{86pt} $u_{1}$}}
  \multiput(10,0)(40,0){2}{\line(1,0){40}}
  \multiput(10,0)(40,0){2}{\circle*{3}}
  \multiput(100,0)(5,0){4}{\makebox(0,0){$\cdots$}}
  \put(125,0){\line(1,0){40}}
  \put(165,0){\circle*{3}}
\end{picture}

\noindent where each vertex $u_{i}$ of the linear diagram represents a disk bundle over $2$-sphere with the given Euler number. 

According to Casson and Harer \cite{CH}, the boundary of $C_p$ is the lens space $L(p^2, 1 - p)$ which also bounds a rational ball $B_p$ with $\pi_1(B_p) = \mathbb{Z}_p$ and $\pi_1(\partial B_p) \rightarrow  \pi_1(B_p)$ surjective. If $C_p$ is embedded in a $4$-manifold $X$ then the rational blowdown manifold $X_p$ is obtained by replacing $C_p$ with $B_p$, i.e., $X_p = (X \setminus C_p) \cup B_p$. If $X$ and $X \setminus C_p$ are simply connected, then so is $X_p$. 

\begin{lem}\label{thm:rb} $b_{2}^{+}(X_p) = {b_2}^{+}(X)$, $\sigma(X_p) = \sigma(X) + (p-1)$, ${c_1}^{2}(X_p) = {c_1}^2(X) + (p-1)$, and $\chi_{h}(X_p) = \chi_{h}(X)$.
\end{lem}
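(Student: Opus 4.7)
The plan is to compute the four invariants by analyzing how the decomposition $X_p = (X \setminus C_p) \cup B_p$ affects each of them, starting from the key topological properties of $C_p$ and $B_p$: the plumbing $C_p$ is a compact $4$-manifold with $b_2(C_p) = p-1$ whose intersection form is negative definite, while $B_p$ is a rational homology ball, so $b_i(B_p;\mathbb{Q}) = 0$ for $i \geq 1$. The negative definiteness of $C_p$ can be verified directly from the plumbing matrix with diagonal $(-(p+2),-2,\ldots,-2)$ and superdiagonal $1$'s, for instance by checking that all its leading principal minors have the correct sign, or by observing that $C_p$ sits inside a blow-up of a rational ball.

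First I would handle $b_2^+$ and $\sigma$. Using the Mayer--Vietoris sequence for $X = (X \setminus C_p) \cup C_p$ and $X_p = (X \setminus C_p) \cup B_p$, glued along the common boundary $L(p^2, 1-p)$ (which has $H_2(L(p^2,1-p);\mathbb{Q}) = 0$), one sees that the inclusion-induced map identifies $H_2(X_p;\mathbb{Q})$ with the orthogonal complement of $H_2(C_p;\mathbb{Q})$ inside $H_2(X;\mathbb{Q})$, at the level of rational intersection forms. Since $H_2(C_p;\mathbb{Q})$ is negative definite of rank $p-1$, removing it and replacing by the rationally acyclic $B_p$ leaves $b_2^+$ unchanged and decreases $b_2^-$ by exactly $p-1$. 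Hence
\begin{equation*}
b_2^+(X_p) = b_2^+(X), \qquad \sigma(X_p) = b_2^+(X_p) - b_2^-(X_p) = \sigma(X) + (p-1).
\end{equation*}

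Next I would compute the Euler characteristic additively. The plumbing $C_p$ deformation retracts onto a linear chain of $p-1$ two-spheres meeting at $p-2$ transverse points, so $\chi(C_p) = 2(p-1) - (p-2) = p$, and $\chi(B_p) = 1$ since $B_p$ is rationally acyclic and connected. Because $\partial C_p = \partial B_p$ has vanishing Euler characteristic, the usual inclusion--exclusion yields $\chi(X_p) = \chi(X) - \chi(C_p) + \chi(B_p) = \chi(X) - (p-1)$. Combining with the signature computation and the standard identities $\chi_h = \tfrac{1}{4}(\chi + \sigma)$ and $c_1^2 = 2\chi + 3\sigma$, both remaining formulas drop out:
\begin{equation*}
\chi_h(X_p) = \tfrac{1}{4}\bigl((\chi(X)-(p-1)) + (\sigma(X)+(p-1))\bigr) = \chi_h(X),
\end{equation*}
\begin{equation*}
c_1^2(X_p) = 2(\chi(X)-(p-1)) + 3(\sigma(X)+(p-1)) = c_1^2(X) + (p-1).
\end{equation*}

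The only nontrivial step is the claim that $C_p$ is negative definite and that $B_p$ is a rational homology ball; both are classical results of Casson--Harer already invoked in the text, so the main obstacle is really just bookkeeping. I would cite \cite{CH,FS1,P1} for these facts and for the detailed verification of the Mayer--Vietoris identification of the rational intersection forms.
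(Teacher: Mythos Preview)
Your proposal is correct and follows essentially the same approach as the paper: both arguments deduce $b_2^+(X_p)=b_2^+(X)$ and $b_2^-(X_p)=b_2^-(X)-(p-1)$ from the negative definiteness of $C_p$ (together with $B_p$ being a rational ball), and then plug $e(X_p)=e(X)-(p-1)$ and $\sigma(X_p)=\sigma(X)+(p-1)$ into $c_1^2=2e+3\sigma$ and $\chi_h=(e+\sigma)/4$. Your version simply supplies more of the underlying Mayer--Vietoris and Euler-characteristic bookkeeping that the paper leaves implicit.
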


\begin{proof} 

Notice that the manifold $C_{p}$ is negative definite, we have ${b_{2}}^{+}(X_{p}) = {b_{2}}^{+}(X)$ and ${b_{2}}^{-}(X_{p}) = {b_{2}}^{-}(X) - (p-1)$. Thus,  $\sigma(X_p) = \sigma(X) + (p-1)$. Using the formulas ${c_1}^{2} = 3\sigma +2e$ and $\chi_{h} = (\sigma +e)/4$, we have ${c_{1}}^{2}(X_{p}) = 3\sigma(X_{p}) + 2e(X_{p}) = 3(\sigma(X)+(p-1)) + 2(e(X)-(p-1)) = {c_{1}}^{2}(X) + (p-1)$ and $\chi_{h}(X_p) = (\sigma(X)+(p-1) + e(X)-(p-1))/4 = \chi_{h}(X)$.  

\end{proof}

\begin{thm}\label{SW1} \cite{FS1, P1}. Suppose $X$ is a smooth 4-manifold with $b_{2}^{+}(X) > 1$ which contains a configuration $C_{p}$. If $L$ is a SW basic class of $X$ satisfying $L\cdot u_{i} = 0$ for any i with $1 \leq i \leq p-2$  and $L\cdot u_{p-1} = \pm p$, then $L$ induces a SW basic class $\bar L$ of $X_{p}$ such that $SW_{X_{p}}(\bar L) = SW_{X}(L)$.  

\end{thm}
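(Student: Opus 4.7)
The plan is to prove the theorem by a standard Seiberg--Witten gluing argument across the lens space $Y = L(p^2, 1-p) = \partial C_p = \partial B_p$. Writing $X_0 := X \setminus \mathrm{int}(C_p) = X_p \setminus \mathrm{int}(B_p)$ for the common piece, we have the decompositions $X = X_0 \cup_Y C_p$ and $X_p = X_0 \cup_Y B_p$. I would produce a spin$^c$ structure $\bar{\mathfrak{s}}$ on $X_p$ with $\bar{\mathfrak{s}}|_{X_0} = \mathfrak{s}_L|_{X_0}$, define $\bar L := c_1(\bar{\mathfrak{s}})$, and then show that the Seiberg--Witten moduli counts for these two spin$^c$ structures agree.

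First I would verify that $\mathfrak{s}_L|_Y$ lies in the image of $\mathrm{Spin}^c(B_p) \to \mathrm{Spin}^c(Y)$. Spin$^c$ structures on $L(p^2, 1-p)$ form a $\mathbb{Z}/p^2$-torsor, and exactly $p$ of them extend across the rational ball $B_p$ (since $H^2(B_p;\mathbb{Z}) = \mathbb{Z}/p$ and a Mayer--Vietoris argument identifies the image of the restriction). The numerical hypothesis $L\cdot u_i = 0$ for $1 \leq i \leq p-2$ and $L\cdot u_{p-1} = \pm p$ pins down $c_1(\mathfrak{s}_L)|_{C_p}$ in $H^2(C_p;\mathbb{Q})$ via the inverse of the plumbing intersection form, and reducing modulo the image of $H^2(C_p;\mathbb{Z})$ in $H^2(Y;\mathbb{Z})$ selects exactly one of those $p$ extendable classes on $Y$. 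This construction yields the required $\bar{\mathfrak{s}}$ on $X_p$.

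Next I would apply the SW gluing theorem to a neck-stretched decomposition along $Y$. Because $C_p$ and $B_p$ are both negative semi-definite ($b_2^+ = 0$), while $b_2^+(X), b_2^+(X_p) > 1$ by Lemma \ref{thm:rb}, transversality is automatic and there are no reducible contributions. The Fintushel--Stern computation \cite{FS1}, extended by Park \cite{P1}, shows that for the specific spin$^c$ structures singled out above, the monopole moduli space on $B_p$ consists of a single point contributing $\pm 1$, and likewise the moduli space on $C_p$ reduces to a single point after a repeated application of the blow-up formula along the chain of $(-2)$-spheres $u_1, \ldots, u_{p-2}$. The gluing formula then equates the two global counts, giving $SW_{X_p}(\bar L) = SW_X(L)$.

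The main obstacle is precisely this local moduli computation on the capping pieces: without the hypothesis $L\cdot u_{p-1} = \pm p$ one cannot conclude that $\mathfrak{s}_L|_Y$ extends at all across $B_p$, and without the vanishing $L\cdot u_i = 0$ for $i \leq p-2$ one cannot iteratively reduce the SW count on $C_p$ to that of a single $(-p-2)$-disk bundle via the blow-up formula. The careful bookkeeping of signs and the identification of the extended class $\bar L$ on $X_p$ with $c_1(\bar{\mathfrak{s}})$ is the technical heart of \cite{FS1, P1}, and I would reference those sources for the detailed verification.
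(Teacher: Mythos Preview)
The paper does not prove this theorem; it is stated as a quotation of results from \cite{FS1, P1} and used as a black box in the proof of Theorem~\ref{theorem1}. So there is nothing in the paper to compare your sketch against.

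That said, your outline is the correct one and tracks the argument in the cited sources: decompose along $Y=L(p^2,1-p)$, check via the intersection conditions on the $u_i$ that the restricted spin$^c$ structure extends over $B_p$, and then invoke a gluing/product formula using that $b_2^+(C_p)=b_2^+(B_p)=0$. One small inaccuracy: the reduction on the $C_p$ side is not literally ``repeated application of the blow-up formula along the $(-2)$-spheres''; in \cite{FS1} the wall-crossing/dimension count on $C_p$ is handled directly for the whole plumbing (the $(-2)$-spheres are not exceptional spheres, so the ordinary blow-up formula does not apply to them). Apart from that wording, your plan matches the original proofs and there is no gap to flag.
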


\begin{thm}\label{SW2} \cite{FS1, P1} If a simply connected smooth $4$-manifold $X$ contains a configuration $C_{p}$, then the SW-invariants of $X_{p}$ are completely determined by those of $X$. That is, for any characteristic line bundle $\bar{L}$ on $X_{p}$ with $SW_{X_{p}}(\bar{L}) \ne 0$, there exists a characteristic line bundle $L$ on $X$ 
such that $SW_{X}(L) = SW_{X_{p}}(\bar{L})$.

\end{thm}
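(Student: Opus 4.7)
The plan is to lift the characteristic class $\bar{L}$ on $X_p$ to a characteristic class $L$ on $X$ that satisfies the numerical hypotheses of Theorem \ref{SW1}, so that Theorem \ref{SW1} itself will supply the equality $SW_X(L) = SW_{X_p}(\bar{L})$. To set this up, observe that $X$ and $X_p$ share the common piece $Y := X \setminus C_p = X_p \setminus B_p$ with $\partial Y = L(p^2, 1-p)$, so they differ only by the choice of cap ($C_p$ versus $B_p$) glued along the lens space boundary. Any line bundle on $X_p$ restricts to one on $Y$, and any line bundle on $X$ is determined by its restriction to $Y$ together with its intersection numbers with the generating $2$-spheres $u_1, \dots, u_{p-1}$ of $H_2(C_p;\mathbb{Z})$.

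First I would restrict $\bar L$ to $\bar L|_Y$ and observe that, because $B_p$ is a rational ball (so $H^*(B_p;\mathbb{Q}) \cong H^*(\mathrm{pt};\mathbb{Q})$), the Mayer--Vietoris sequence for $X_p = Y \cup_{\partial} B_p$ gives an injection $H^2(X_p;\mathbb{Q}) \hookrightarrow H^2(Y;\mathbb{Q})$. Thus $\bar L|_Y$ carries the full rational intersection information of $\bar L$, and the only remaining freedom in extending it to a class $L$ on $X = Y \cup_{\partial} C_p$ lies in the choice of the $p-1$ integers $L \cdot u_1, \dots, L \cdot u_{p-1}$.

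Next I would pin down these integers by demanding $L \cdot u_i = 0$ for $1 \le i \le p-2$ and $L \cdot u_{p-1} = \pm p$, which is exactly the input of Theorem \ref{SW1}. This imposes a linear system whose coefficient matrix is the plumbing intersection form of $C_p$; using the structure $\mathrm{diag}(-(p+2),-2,\dots,-2)$ with $-1$ off-diagonal adjacencies, one checks that this system has a unique integer solution, and one verifies that this solution produces a class which is characteristic on $X$ (i.e.\ reduces mod $2$ to $w_2(X)$). The underlying geometric reason is that the $\mathrm{Spin}^c$ structure on $\partial B_p = L(p^2,1-p)$ inherited from $\bar L$ must extend over $B_p$, and under the boundary duality between $B_p$ and $C_p$ the $\mathrm{Spin}^c$ structures on $L(p^2,1-p)$ which extend to $C_p$ as characteristic classes are precisely those whose characteristic covectors take the prescribed values on $u_1,\dots,u_{p-1}$.

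With this $L$ in hand, Theorem \ref{SW1} immediately produces a basic class on $X_p$ whose SW invariant equals $SW_X(L)$, and unwinding the construction one checks that this basic class is $\bar L$ itself, giving $SW_X(L) = SW_{X_p}(\bar L)$. The main obstacle is the middle step: one must verify the existence and uniqueness of the characteristic lift with the prescribed intersection pattern, which is a careful arithmetic computation on the plumbing lattice and on $\mathrm{Spin}^c$ structures of $L(p^2,1-p)$. This is the content of the arguments of Fintushel--Stern \cite{FS1} and Park \cite{P1}.
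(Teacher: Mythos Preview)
The paper does not prove Theorem~\ref{SW2}; it is simply cited from \cite{FS1,P1} without argument. So there is no in-paper proof to compare against, and your proposal must be judged on its own merits.

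Your geometric setup is the right one: one does lift $\bar L$ across the common piece $Y$ to a characteristic class $L$ on $X$ with the prescribed evaluations on the $u_i$, and the $\mathrm{Spin}^c$ bookkeeping on $L(p^2,1-p)$ is exactly how Fintushel--Stern and Park organize the argument. The gap is in your final step. You want to invoke Theorem~\ref{SW1} to conclude $SW_X(L)=SW_{X_p}(\bar L)$, but as stated here Theorem~\ref{SW1} takes as hypothesis that $L$ is already a Seiberg--Witten basic class of $X$. You have only produced $L$ as a characteristic cohomology class; you do not yet know $SW_X(L)\ne 0$, so Theorem~\ref{SW1} does not apply. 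In other words, your attempted reduction of Theorem~\ref{SW2} to Theorem~\ref{SW1} is circular: establishing that your lift $L$ is basic is essentially the content of Theorem~\ref{SW2}.

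In \cite{FS1,P1} this is not proved by reducing to Theorem~\ref{SW1}. Both theorems are consequences of a single gluing analysis of the Seiberg--Witten moduli spaces over $Y\cup C_p$ and $Y\cup B_p$ (together with a dimension count and, in the $b_2^+=1$ case, wall-crossing), which yields the equality $SW_X(L)=SW_{X_p}(\bar L)$ for the corresponding $\mathrm{Spin}^c$ structures without assuming either side is nonzero. Your sketch would become correct if you replaced the appeal to Theorem~\ref{SW1} with a direct appeal to that gluing/excision identity, but then you are invoking exactly the machinery of \cite{FS1,P1} rather than deducing Theorem~\ref{SW2} from Theorem~\ref{SW1}.
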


In this paper we only use the rational blowdown surgery along configuration $C_{2}$, i.e. the rational blowdowns along the $-4$ spheres. 

In Section 6, we shall use the following theorem of H. Endo and Y. Gurtas.

\begin{thm}\label{bd} Let $\varrho,\varrho'$ be positive relators of $\mathcal{M}_g$ 
and $M_{\varrho},M_{\varrho'}$ the corresponding Lefschetz fibrations over $S^2$, respectively.
If $\varrho'$ is obtained by applying 
a lantern substitution to $\varrho$, 
then the $4$-manifold $M_{\varrho'}$ is a rational blowdown of $M_{\varrho}$ 
along a configuration $C_2\subset M_{\varrho}$. 
\end{thm}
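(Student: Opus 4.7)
The plan is to localize the argument. Let $D\subset S^{2}$ be a small disk containing precisely the four (respectively three) critical values of $M_\varrho$ (respectively $M_{\varrho'}$) that are affected by the lantern substitution, with the remaining critical values lying outside $D$. Set $W=f^{-1}(D)\subset M_\varrho$ and $W'=(f')^{-1}(D)\subset M_{\varrho'}$. Because the lantern relation equates the boundary monodromies $t_{\delta_{1}}t_{\delta_{2}}t_{\delta_{3}}t_{\delta_{4}}$ and $t_\gamma t_\beta t_\alpha$, the boundaries $\partial W$ and $\partial W'$ are canonically identified as $\Sigma_g$-bundles over $S^{1}$, and the outer pieces $M_\varrho\setminus\mathrm{int}(W)$ and $M_{\varrho'}\setminus\mathrm{int}(W')$ are identical as $4$-manifolds with boundary. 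Hence it suffices to exhibit a configuration $C_2\subset W$, i.e.\ a $-4$-sphere $S$, and to show that $W'\cong (W\setminus\nu(S))\cup_{\partial} B_{2}$.

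Next, I would construct $S$ as a matching-cycle type sphere. Since $\delta_{1},\delta_{2},\delta_{3},\delta_{4}$ cobound the 4-holed sphere $P\subset\Sigma_g$ underlying the lantern relation, and each $\delta_{i}$ is the vanishing cycle of a Lefschetz critical point in $W$, each $\delta_{i}$ bounds a Lefschetz thimble $D_{i}\subset W$. Gluing $P$ to the four thimbles along the common boundary $\bigsqcup\delta_{i}$ produces a smoothly embedded closed surface $S=P\cup D_{1}\cup D_{2}\cup D_{3}\cup D_{4}$. An Euler characteristic count gives $\chi(S)=\chi(P)+4=-2+4=2$, so $S$ is a $2$-sphere. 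Its self-intersection is computed from the standard fact that capping a properly embedded surface in a fiber by $k$ Lefschetz thimbles contributes $-k$ to the self-intersection (each thimble is attached with framing $-1$ relative to the fiber framing), giving $S\cdot S=0-4=-4$. Thus a regular neighborhood $\nu(S)$ is diffeomorphic to the plumbed disk bundle $C_{2}$.

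Then I would establish the diffeomorphism $W'\cong (W\setminus\nu(S))\cup B_{2}$ by directly comparing handle decompositions. Starting from the standard handlebody picture of a Lefschetz fibration over a disk, $W$ is built from $\Sigma_g\times D^{2}$ by attaching four $2$-handles with framing $-1$ (relative to the fiber framing) along parallel copies of $\delta_{1},\ldots,\delta_{4}$ in $\Sigma_g\times\{\mathrm{pt}\}$; the sphere $S$ is visible as the union of $P$ with the cores of these four handles, and excising $\nu(S)$ is equivalent to removing these handles together with a tubular neighborhood of $P$. Likewise $W'$ is built from $\Sigma_g\times D^{2}$ by attaching three $-1$-framed $2$-handles along $\alpha,\beta,\gamma$. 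Applying Kirby moves to the first picture after excising $\nu(S)$, and using the lantern relation at the level of the fibered handlebody, one transforms the four-handle picture into the three-handle picture with a rational ball $B_{2}$ attached along the lens space $L(4,1)=\partial\nu(S)=\partial B_{2}$. A conceptually cleaner alternative uses that both $W$ and $W'$ carry symplectic structures from their Lefschetz fibration data, that $S$ is symplectic, and that Symington's symplectic rational blowdown of a symplectic $-4$-sphere yields a well-defined symplectic $4$-manifold; a local model near $P\cup\bigcup D_{i}$ identifies this blowdown with the fibered piece having monodromy $t_\gamma t_\beta t_\alpha$.

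The main obstacle is this final identification $W'\cong (W\setminus\nu(S))\cup B_{2}$: one must translate between the four-Dehn-twist and three-Dehn-twist local models of the Lefschetz fibration, either by an explicit Kirby-calculus sequence or by constructing a symplectic local model realizing the blowdown. Once this local identification is in hand, the theorem follows immediately by gluing back the common outer piece $M_\varrho\setminus\mathrm{int}(W)=M_{\varrho'}\setminus\mathrm{int}(W')$, so that the global $4$-manifold $M_{\varrho'}$ is the rational blowdown of $M_{\varrho}$ along the configuration $C_{2}$ carried by $S$.
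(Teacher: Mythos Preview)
The paper does not prove this theorem at all: it is quoted as a result of Endo and Gurtas \cite{EG} (``In Section 6, we shall use the following theorem of H. Endo and Y. Gurtas'') and no argument is supplied. So there is no ``paper's own proof'' to compare against; your proposal is an attempt to reconstruct the Endo--Gurtas argument.

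As a sketch of that argument, your outline is on the right track. The localization to a disk $D$ containing the four relevant critical values is correct, and the construction of the $-4$-sphere $S$ as the $4$-holed sphere $P$ capped off by the four Lefschetz thimbles, together with the Euler-characteristic and self-intersection computations, is exactly the key geometric input in \cite{EG}. Where your proposal is incomplete---and you correctly flag this---is the identification $W'\cong (W\setminus\nu(S))\cup B_2$. In Endo--Gurtas this is carried out by an explicit Kirby-calculus computation: one draws the handlebody for $W$ (trivial $\Sigma_g\times D^2$ plus four $-1$-framed $2$-handles on the $\delta_i$), blows down $S$ (equivalently, performs the rational blowdown along $C_2$), and checks via handle slides dictated by the lantern configuration that the result is the handlebody for $W'$ (three $-1$-framed $2$-handles on $\alpha,\beta,\gamma$). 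Your gesture toward ``Kirby moves \dots\ using the lantern relation at the level of the fibered handlebody'' is the right direction but is not yet a proof; the Symington alternative you mention also requires verifying that the symplectic local model on the blowdown side agrees with the Lefschetz piece $W'$, which is again work you have not done. So: correct strategy, correct construction of $S$, but the decisive local identification remains a genuine gap.
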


\medskip

We will also need the following lemmas, which are due to R. Gompf, to analyze the symplectic $4$-manifolds constructed in Section 6. For the proof we refer the reader to \cite{gompf1} and \cite{Dorf2}. See also the work of J. Dorfmeister \cite{Dorf1, Dorf2} who gives a related criteria on symplectic minimality and how the symplectic Kodaira dimension changes under the rational blowdown along a symplectic $-4$ sphere (see also related work in \cite{DZ}).  

\begin{lem}\label{n=1}
Let $(X,V_X)$ be a relatively minimal smooth pair with $V_X$ an embedded $-4$ sphere.  If $X$ contains a smoothly embedded exceptional sphere transversely intersecting the hypersurface $V_X$ in a single positive point, then the manifold obtained under $-4$ blow-down of $V_X$ is diffeomorphic to the blow-down of $X$ along this sphere.
\end{lem}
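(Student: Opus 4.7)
The plan is to localize the argument to a tubular neighborhood of $V_X \cup E$ in $X$, where $E$ denotes the given exceptional sphere. Since $V_X$ and $E$ are smoothly embedded spheres of squares $-4$ and $-1$ meeting transversely at a single positive point, a closed regular neighborhood $N$ of $V_X \cup E$ is diffeomorphic to the linear plumbing of the $-4$ and $-1$ disk bundles over $S^2$; equivalently, $N$ has a Kirby diagram given by a Hopf link with framings $-4$ and $-1$, and $\partial N \cong L(3,1)$ (as one sees by blowing down the $-1$ component in the Kirby picture). Both the rational blow-down of $V_X$ and the ordinary blow-down of $E$ are supported inside $N$, so the lemma reduces to showing that the two resulting replacements of $N$ are diffeomorphic relative to the common outer boundary $L(3,1)$.

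Next I would identify $N$ with the blow-up of the total space of the $-3$ disk bundle over $S^2$ at a point on its zero section. Smoothing the unique transverse intersection of $V_X$ and $E$ yields an embedded sphere $V'$ in $N$ representing the class $[V_X] + [E]$, with self-intersection $V_X^2 + 2\,V_X \cdot E + E^2 = -4 + 2 - 1 = -3$. In this description $V_X$ is the proper transform of $V'$ under the blow-up and $E$ is the exceptional sphere. Consequently, the ordinary blow-down of $E$ in $N$ returns a tubular neighborhood of $V'$, which is the $-3$ disk bundle over $S^2$, again with boundary $L(3,1)$.

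The crux is then to verify that the rational blow-down of $V_X$ inside $N$ also produces the $-3$ disk bundle over $S^2$. Using the standard Fintushel--Stern handlebody for the rational ball $B_2$ (one $0$-handle, one $1$-handle, and one $2$-handle that wraps the $1$-handle twice), I would start from the Hopf-link Kirby diagram of $N$, delete the $-4$-framed $2$-handle, and attach $B_2$ along the resulting $L(4,1)$ boundary. A sequence of handle slides of the $2$-handle of $B_2$ over the $-1$-framed $2$-handle representing $E$, followed by cancellation of the $1$-handle of $B_2$ against this $-1$-framed $2$-handle, reduces the diagram to a single unknot with framing $-3$, which is precisely the Kirby diagram of the $-3$ disk bundle over $S^2$.

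Combining the three steps, both operations replace $N$ by a copy of the $-3$ disk bundle over $S^2$ rel boundary, so gluing back the unchanged complement $X \setminus N^{\circ}$ yields the desired diffeomorphism between the $-4$ blow-down of $V_X$ and the ordinary blow-down of $X$ along $E$. The main obstacle is the explicit handle-slide and cancellation computation in the third step; intrinsically, it amounts to recognizing the rational ball $B_2$ inside the blow-up model as the complementary piece which, together with $\nu(V_X)$, reassembles the original neighborhood $\nu(V' \cup E) = N$, so that the $-4$ rational blow-down of $V_X$ precisely cancels the blow-up producing $V_X$ from $V'$.
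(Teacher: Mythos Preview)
The paper does not supply its own proof of this lemma; it attributes the result to Gompf and sends the reader to \cite{gompf1} and \cite{Dorf2}. So there is no in-paper argument to compare against directly.

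That said, your outline is the standard route and matches what one finds in the cited sources. Localizing to the plumbing $N$ of the $-4$ and $-1$ disk bundles (Hopf link with framings $-4$ and $-1$, boundary $L(3,1)$) is exactly right, and your identification of the ordinary blow-down of $E$ with the $-3$ disk bundle is immediate from blowing down the $-1$ component. The remaining step---showing that replacing $\nu(V_X)$ by the rational ball $B_2$ inside $N$ also yields the $-3$ disk bundle rel boundary---is precisely Gompf's Kirby computation, and your sketch (insert the dotted-circle/2-handle model of $B_2$, slide, then cancel the $1$-handle against a $-1$-framed $2$-handle) is the correct mechanism. You rightly flag this as the only step requiring genuine care; the calculation is short but must be done so that the boundary identification with $\partial N$ is tracked, since the conclusion needs the two fillings to agree \emph{rel} $L(3,1)$. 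Your alternative phrasing at the end---recognizing $B_2$ inside the blow-up model so that the rational blow-down undoes the blow-up---is in fact the conceptual heart of Gompf's argument and is a cleaner way to finish than raw handle moves.
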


\begin{lem}\label{n=2}
Let $(X,V_X)$ be a relatively minimal smooth pair with $V_X$ an embedded $-4$ sphere. If $X$ contain two disjoint smoothly embedded exceptional spheres each transversely intersecting the hypersurface $V_X$ in a single positive point, then the manifold obtained under $-4$ blow-down of $V_X$ is diffeomorphic to the blow-down of $X$ along one of these spheres.  
  
\end{lem}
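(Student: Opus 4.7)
My plan is to derive Lemma \ref{n=2} as a short consequence of Lemma \ref{n=1}. Under the hypotheses of Lemma \ref{n=2}, the pair $(X, V_X)$ is relatively minimal, $V_X$ is an embedded $-4$ sphere, and there are two disjoint exceptional spheres $E_1, E_2 \subset X$ each meeting $V_X$ transversely in a single positive point. In particular, $E_1$ alone already satisfies the existence hypothesis of Lemma \ref{n=1}: it is a smoothly embedded exceptional sphere intersecting $V_X$ transversely in exactly one positive point, and the pair $(X, V_X)$ is relatively minimal (the presence of the other exceptional sphere $E_2$ does not interfere with relative minimality, since $E_2$ is not disjoint from $V_X$). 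Applying Lemma \ref{n=1} to the pair $(X, V_X)$ with the distinguished exceptional sphere $E_1$ then produces a diffeomorphism between the $-4$ blow-down of $V_X$ and the ordinary blow-down of $X$ along $E_1$, which is precisely the conclusion of Lemma \ref{n=2}.

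One can also rerun the same argument with $E_2$ in place of $E_1$, yielding a diffeomorphism between the $-4$ blow-down of $V_X$ and the ordinary blow-down of $X$ along $E_2$. Composing the two diffeomorphisms delivers, as a bonus, that the two ordinary blow-downs of $X$ along $E_1$ and along $E_2$ are themselves mutually diffeomorphic, reflecting the symmetric roles of $E_1$ and $E_2$ in the configuration $E_1 \cup V_X \cup E_2$. This symmetry is what the formulation ``one of these spheres'' in the statement is alluding to.

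The only place where something could go wrong is the verification of the hypotheses of Lemma \ref{n=1}, and the only nontrivial point to check is preservation of relative minimality. Because the definition of a relatively minimal pair only prohibits exceptional spheres that are disjoint from $V_X$, and both $E_1$ and $E_2$ meet $V_X$ transversely, minimality is automatic. I therefore expect no genuine obstacle beyond this bookkeeping; all the substantive geometric content is already packaged inside Lemma \ref{n=1}, whose proof proceeds via an explicit Kirby calculus analysis of the neighborhood of a $-4$ sphere meeting a single exceptional curve, as carried out in Gompf's work and Dorfmeister's treatment.
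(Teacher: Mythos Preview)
Your derivation is logically valid: as the hypothesis of Lemma~\ref{n=1} is phrased in this paper as a pure existence statement (``$X$ contains a smoothly embedded exceptional sphere \ldots''), having two disjoint such spheres certainly gives you one, relative minimality is preserved exactly for the reason you note, and the conclusion follows. Your symmetry observation that the two ordinary blow-downs along $E_1$ and $E_2$ are mutually diffeomorphic is a pleasant corollary.

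That said, the paper itself does not prove either lemma; it simply attributes both to Gompf and refers the reader to \cite{gompf1} and \cite{Dorf2}. In those sources the two statements are not related by the trivial implication you use, but are instead cases of a single analysis indexed by the number $n$ of exceptional spheres meeting the $-4$ sphere (hence the labels). For each $n$ one works with the regular neighborhood of $V_X$ together with the $n$ exceptional curves and carries out a separate Kirby calculus computation; the $n=1$ and $n=2$ pictures are genuinely different local models, and the identifications with the ordinary blow-down are established independently. So while your shortcut is perfectly sound for the statements as written here, it is worth being aware that the original treatments do not package the $n=1$ case as a hypothesis general enough to absorb $n=2$; the paper's phrasing is slightly more permissive than the case distinction in the cited references, and that permissiveness is what makes your reduction go through.
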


\section{Knot Surgery} Let $X$ be a $4$-manifold with ${b_2}^{+}(X) > 1$ and contain a homologically essential torus $T$ of self-intersection $0$. Let $N(K)$ be a tubular neighborhood of $K$ in $S^3$, and let $T \times D^2$ be a tubular neighborhood of $T$ in $X$. The knot surgery manifold $X_{K}$ is defined by $X_K = (X \setminus (T \times D^2)) \cup (S^1 \times (S^3 \setminus N(K))$ where two pieces are glued in a way that the homology class of $[pt \times \partial  D^2]$ is identifed with $[pt \times \lambda]$ where $\lambda$ is the class of the longitude of knot $K$. Fintushel and Stern proved the theorem that shows Seiberg-Witten invariants of $X_{K}$ can be completely determined by the Seiberg-Witten invariant of $X$ and the Alexander polynomial of $K$ \cite{FS2}. Furthermore, if $X$ and $X \setminus T$ are simply connected, then so is $X_K$. 

\begin{thm}\label{thm:knotsurgery} Suppose that $\pi_{1}(X) = \pi_{1}(X \setminus T) = 1$ and $T$ lies in a cusp neighborhood in $X$. Then $X_{K}$ is homeomorphic to $X$ and Seiberg-Witten invariants of $X_{K}$ is $SW_{X_K} = SW_{X} \cdot \Delta_{K}(t^2)$, where $t = t_{T}$ and $\Delta_{K}$ is the symmetrized Alexander polynomial of $K$. If the Alexander polynomial $\Delta_{K}(t)$ of knot $K$ is not monic then $X_K$ admits no symplectic structure. Moreover, if $X$ is symplectic and $K$ is a fibered knot, then $X_{K}$ admits a symplectic structure.
\end{thm}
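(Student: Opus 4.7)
The plan is to prove each of the four assertions in turn. First I would establish the homeomorphism claim. Using the decomposition $X_{K}=(X\setminus(T\times D^{2}))\cup(S^{1}\times(S^{3}\setminus N(K)))$ and the fact that $H_{\ast}(S^{3}\setminus N(K))\cong H_{\ast}(S^{1})$, the K\"unneth formula gives $H_{\ast}(S^{1}\times(S^{3}\setminus N(K)))\cong H_{\ast}(T^{2})\cong H_{\ast}(T\times D^{2})$, so a Mayer--Vietoris comparison shows that $H_{\ast}(X_{K})\cong H_{\ast}(X)$ and that the intersection form is preserved. For simple connectivity I would apply Seifert--van Kampen: the meridian of $K$ normally generates $\pi_{1}(S^{3}\setminus N(K))$ and, by the gluing convention, is identified with the meridian of $T$, which is nullhomotopic in $X\setminus(T\times D^{2})$ since $\pi_{1}(X\setminus T)=1$; the remaining $S^{1}$ factor dies because the hypothesis that $T$ sits in a cusp neighborhood supplies a vanishing cycle bounding a disk. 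Freedman's classification then upgrades the homological identification to a homeomorphism $X_{K}\approx X$.

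For the Seiberg--Witten formula I would follow the Fintushel--Stern strategy of realizing knot surgery by a sequence of torus surgeries. Any knot $K$ can be unknotted by finitely many crossing changes, and each crossing change corresponds to a $\pm 1$ log transform on an embedded torus inside the $S^{1}\times(S^{3}\setminus N(K))$ piece of $X_{K}$. The Morgan--Mrowka--Szab\'o surgery formula for Seiberg--Witten invariants under such log transforms exactly parallels the Conway skein relation $\Delta_{K_{+}}-\Delta_{K_{-}}=(t^{1/2}-t^{-1/2})\,\Delta_{K_{0}}$ for the Alexander polynomial. Since the unknot gives back $X$ with Seiberg--Witten polynomial $SW_{X}$, induction on the unknotting number yields the identity $SW_{X_{K}}=SW_{X}\cdot\Delta_{K}(t^{2})$.

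The two assertions about symplectic structures go in opposite directions. When $K$ is fibered, $S^{3}\setminus N(K)$ fibers over $S^{1}$ with a Seifert-surface fiber, so $S^{1}\times(S^{3}\setminus N(K))$ inherits a fibration over $T^{2}$ whose fiber is a closed symplectic surface; Thurston's fibered-construction then produces a symplectic form on this piece. Matching it with a Weinstein-type symplectic tubular neighborhood of $T$ in $X$ along the boundary three-torus, Gompf's symplectic normal connect-sum glues the two pieces into a symplectic $X_{K}$. Conversely, if $X_{K}$ admitted a symplectic structure then Taubes' theorem would force the canonical class $K_{X_{K}}$ to be an extremal Seiberg--Witten basic class with $SW_{X_{K}}(K_{X_{K}})=\pm 1$; reading this extremal basic class off the formula $SW_{X_{K}}=SW_{X}\cdot\Delta_{K}(t^{2})$ shows that its value coincides, up to sign, with the leading coefficient of $\Delta_{K}$, so $\Delta_{K}$ must be monic.

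The main obstacle is the Seiberg--Witten identity in the second paragraph: the homology and $\pi_{1}$ computation is a routine topological exercise, and the two symplectic statements reduce cleanly to the Thurston--Gompf fibered-sum construction and to Taubes' extremal-basic-class theorem respectively. In contrast, the SW identity requires careful bookkeeping of signs, the full strength of the torus-surgery formula, and a verification that no anomalous basic classes appear outside the factor $\Delta_{K}(t^{2})$; the hypothesis that $T$ lies in a cusp neighborhood is precisely what controls the neck-stretching analysis underlying the skein relation and pins down the base case of the induction.
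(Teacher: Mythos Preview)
The paper does not prove this theorem at all: immediately after stating it, the authors write ``We refer the reader to \cite{FS2} for the details,'' so there is no in-paper proof to compare your proposal against. This is a quoted background result of Fintushel and Stern, not an original contribution of the paper.

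That said, your sketch is a reasonable outline of the actual Fintushel--Stern argument in \cite{FS2}: the homeomorphism via Mayer--Vietoris and Freedman, the skein-theoretic induction using the Morgan--Mrowka--Szab\'o/Meng--Taubes torus-surgery formula, the Thurston--Gompf construction for fibered $K$, and the Taubes constraint for the non-monic obstruction are exactly the ingredients used there. One caution: in the $\pi_1$ step you claim the $S^1$ factor dies because of a vanishing disk coming from the cusp neighborhood, but you should note that this disk lies in $X\setminus T$ (the cusp neighborhood minus the torus contains the relevant $-1$-framed $2$-handle), and more to the point the hypothesis $\pi_1(X\setminus T)=1$ already kills everything pushed in from the $X$ side, so the argument is slightly simpler than you indicate. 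Your closing paragraph correctly identifies where the real work lies; the cusp hypothesis is used in \cite{FS2} mainly to ensure the auxiliary tori used in the skein moves are nullhomologous and to control wall-crossing, rather than for any ``neck-stretching analysis'' per se.
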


We refer the reader to \cite{FS2} for the details.

\section{Construction of exotic 4-manifolds via lantern substitution}

In this section, we first construct a simply connected, minimal symplectic $4$-manifold $X$ homeomorphic but not diffeomorphic to\/ $3\CP\#15\CPb$ starting from $K3\#2\CPb$ and applying the sequence of six rational blowdowns via lantern substitutions. Next, by performing knot surgery on a homologically essential torus of self-intersection $0$, we obtain an infinite family of simply connected, symplectic and non-symplectic $4$-manifolds all homeomorphic but not diffeomorphic to\/ $X$.  Using Seiberg-Witten invariants, we distinguish their smooth structures.

\begin{thm}\label{theorem1} There exists an irreducible symplectic\/ $4$-manifold  $X$ homeomorphic but not diffeomorphic to\/ $3\CP\#15\CPb$ that can be obtained using the genus two Lefschetz fibration on $K3\#2\CPb$ over $S^{2}$ with global monodromy given by the relation $\varrho = (c_{5}c_{4}c_{3}c_{2}c_{1})^6 = 1$ in the mapping class group $M_2$ by applying six lantern substitutions. 
\end{thm}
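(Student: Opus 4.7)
My plan is to realise $X$ as the total space of a new positive genus-two Lefschetz fibration obtained from the given fibration on $K3\#2\CPb$ by six lantern substitutions, interpret this via Theorem~\ref{bd} as six pairwise disjoint rational blowdowns along $-4$-spheres, and then distinguish $X$ smoothly from $3\CP\#15\CPb$ using Seiberg-Witten invariants.

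First, starting from the positive relator $\varrho=(c_5c_4c_3c_2c_1)^6$ in $M_2$, I would rewrite it by repeated cyclic conjugation and applications of the commutativity and braid relations of Lemma~\ref{com&braid.lem} so as to display six pairwise disjoint subwords, each of the form $t_{\delta_1}t_{\delta_2}t_{\delta_3}t_{\delta_4}$ with $\delta_1,\dots,\delta_4$ the boundary components of an embedded four-holed sphere $\Sigma_{0,4}\subset\Sigma_2$. Lemma~\ref{lantern.lem} then allows me to substitute each such block by the three-factor product $t_\gamma t_\beta t_\alpha$, producing a new positive relator $\varrho'$ with $30-6=24$ factors whose associated genus-two Lefschetz fibration has total space $X$. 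This step is the principal obstacle: the six disjoint lantern configurations must appear simultaneously inside a single representative of $\varrho$, requiring a substantial amount of bookkeeping of how each conjugation and braid move alters the vanishing cycles. I would model this procedure on the single-lantern example of Endo-Gurtas and exploit the $6$-fold cyclic symmetry of $\varrho$.

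Second, by Theorem~\ref{bd} each of the six lantern substitutions performs a rational blowdown along a configuration $C_2$ (a smoothly embedded $-4$-sphere); because the substitutions occur in pairwise disjoint subwords, the six configurations $C_2^{(1)},\dots,C_2^{(6)}\subset K3\#2\CPb$ can be chosen disjoint. Iterating Lemma~\ref{thm:rb} six times yields
\[
\chi_h(X)=2,\qquad c_1^{2}(X)=-2+6=4,\qquad \sigma(X)=-12,\qquad e(X)=20,
\]
so that $b_2^{+}(X)=3$ and $b_2^{-}(X)=15$. A van Kampen argument, using the two $-1$-sphere sections of the fibration from Lemma~\ref{E} to kill the meridian of each $C_2^{(j)}$ in the complement, shows that $K3\#2\CPb\setminus(C_2^{(1)}\cup\cdots\cup C_2^{(6)})$ is simply connected, and hence $\pi_1(X)=1$. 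Since $\sigma(X)=-12\not\equiv 0\pmod{16}$, Rokhlin's theorem forces the intersection form of $X$ to be odd, so Freedman's classification gives $X\approx 3\CP\#15\CPb$; a symplectic structure on $X$ is provided by Gompf's theorem applied to its genus-two Lefschetz fibration.

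Third, I would distinguish $X$ from $3\CP\#15\CPb$ smoothly and verify minimality via Seiberg-Witten invariants. The target manifold $3\CP\#15\CPb$ admits a metric of positive scalar curvature, hence has trivial SW invariants. For the source, the unique SW basic class of $K3$ is zero, so the blow-up formula yields $\{\pm E_1\pm E_2\}$ as the basic classes of $K3\#2\CPb$. I would pick a basic class $L$ whose pairing with each of the six $-4$-sphere tops $u_1^{(j)}$ is $\pm 2$ (the additional pairing condition in Theorem~\ref{SW1} being vacuous since $p=2$); six successive applications of Theorem~\ref{SW1} then produce a nontrivial SW basic class $\bar L$ on $X$, whence $SW_X\not\equiv 0$ and $X$ is not diffeomorphic to $3\CP\#15\CPb$. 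For symplectic minimality, I would invoke Lemmas~\ref{n=1} and~\ref{n=2} together with Taubes's obstruction: any symplectic exceptional sphere in $X$ would force a basic class of the form $K_X-2E$, incompatible with the explicit list above. Minimality, combined with simple connectedness and $b_2^{+}>1$, then yields smooth irreducibility of $X$ by Kotschick's theorem.
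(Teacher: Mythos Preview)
Your overall strategy coincides with the paper's: realise six lantern substitutions in $\varrho$, interpret them as six $C_2$ rational blowdowns via Theorem~\ref{bd}, compute $(e,\sigma)$, establish simple connectivity, apply Freedman, and distinguish via Seiberg--Witten.  A few points, however, depart from or fall short of the paper's argument.

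\emph{Finding the six lanterns.}  Your appeal to the ``$6$-fold cyclic symmetry'' of $\varrho$ is misleading.  A single block $c_5c_4c_3c_2c_1$ has only five letters and consecutive $c_i$ intersect, so one cannot extract a lantern from it.  The paper instead groups $\varrho=\big((c_5c_4c_3c_2c_1)^2\big)^3$ and, in Lemma~\ref{threel}, finds \emph{three different} lanterns (with boundary quadruples $\{c_5,c_5,c_1,c_1\}$, $\{c_1,c_3,c_1,c_3\}$, $\{c_3,c_5,c_5,c_3\}$), one in each length-$10$ block; this yields only three substitutions.  The remaining three are obtained in Lemma~\ref{sixl} by a further, non-symmetric sequence of conjugations and braid moves that shuffles letters across the block boundaries.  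So the six lanterns are not obtained by iterating a single move cyclically.

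\emph{Simple connectivity.}  Your van Kampen argument using the $-1$ sections in the complement is not what the paper does.  The paper works directly with the Lefschetz fibration on $X$: from the exact sequence $\pi_1(\Sigma_2)\to\pi_1(X)\to\pi_1(S^2)=1$ one gets generators $c_1,\dots,c_5$, and then the \emph{explicit} list of $24$ vanishing cycles produced in Lemma~\ref{sixl} is used to kill them one by one.  Your section argument is plausible (since, as the paper computes, $S\cdot E_i=1$ for each $-4$ sphere $S$, the sections provide meridian disks), but it presupposes exactly the intersection computation that the paper carries out for a different purpose.

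\emph{Minimality.}  Here you have a small gap.  Showing $SW_X\not\equiv 0$ via Theorem~\ref{SW1} is not enough: your argument that a symplectic exceptional sphere would force an extra basic class $K_X-2E$ only yields a contradiction if you know the \emph{complete} list of basic classes of $X$.  The paper obtains this from Theorem~\ref{SW2} (together with the identification of the first two blowdowns with $K3$ via Lemma~\ref{n=2}), concluding that $\pm K_X$ are the \emph{only} basic classes; minimality then follows since $(K_X-(-K_X))^2=4K_X^2=16\neq -4$.  Your invocation of Lemmas~\ref{n=1} and~\ref{n=2} for minimality is misplaced---in the paper they serve only to identify the intermediate stage with $K3$, not to prove minimality of $X$.
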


In order to prove this theorem, we first prove the following two lemmas. 

\begin{lem}\label{threel} The word $c_{5}c_{4}c_{3}c_{2}c_{1}c_{5}c_{4}c_{3}c_{2}c_{1}$ in the mapping class group $M_2$ can be conjugated to contain the lantern relation in three different ways. \end{lem}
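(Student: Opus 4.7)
The plan is to work entirely inside $M_2$ using Lemma~\ref{com&braid.lem} (commutativity of $t_{c_i}t_{c_j}$ when $|i-j|\geq 2$, braid relation $t_{c_i}t_{c_{i+1}}t_{c_i}=t_{c_{i+1}}t_{c_i}t_{c_{i+1}}$) together with conjugation, recalling that $f\circ t_\alpha \circ f^{-1}=t_{f(\alpha)}$. The goal is to produce three conjugate words to $W := c_5c_4c_3c_2c_1c_5c_4c_3c_2c_1$, each of which has a visible consecutive length-three subword $t_\alpha t_\beta t_\gamma$ matching the left-hand side of a lantern relation (Lemma~\ref{lantern.lem}) for some embedded 4-holed sphere in $\Sigma_2$ — the three embedded 4-holed spheres being geometrically distinct.

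First I would exploit the rich commutation structure of the chain $c_1,\ldots,c_5$: the letter $c_5$ commutes with $c_1,c_2,c_3$, and $c_1$ commutes with $c_3,c_4,c_5$, so many letters of $W$ can be slid past one another without any cost. This lets one bring either the boundary pair $c_1 c_5$ (in the middle of $W$) or the pairs $c_2 c_4$, $c_3 c_5$, $c_1 c_3$ into adjacency with other chosen letters. Braid moves $c_i c_{i+1} c_i \leftrightarrow c_{i+1} c_i c_{i+1}$ then replace a subword with a Hurwitz-equivalent one in which the middle letter is the Dehn twist along the twisted image $t_{c_i}(c_{i+1})$ or $t_{c_{i+1}}(c_i)$, and these twisted images are precisely the sort of new curves needed to produce lantern configurations.

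Second, I would identify, on $\Sigma_2$, three distinct 4-holed subspheres whose lantern relations correspond to the subwords just produced. Natural candidates come from pieces of $\Sigma_2$ cut open along two of the $c_i$'s (so that the four boundary components of the 4-holed sphere are parallel copies of some $c_j$'s, together with curves obtained by Dehn-twisting a $c_j$ along an adjacent $c_{j\pm 1}$). For each of the three target forms, one reads off three Dehn twists along curves on that subsphere that pairwise intersect once, and these are exactly the $t_\alpha, t_\beta, t_\gamma$ of Figure~\ref{fig:lan}. Three explicit Hurwitz rearrangements of $W$ should then be written down, with the lantern-triple clearly boxed out in each.

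The main obstacle is geometric rather than algebraic: verifying that the three candidate subwords really do live on three \emph{different} embedded 4-holed spheres inside $\Sigma_2$, and that in each case the three Dehn-twist curves and their four invisible boundary curves $\delta_1,\ldots,\delta_4$ are configured exactly as in Figure~\ref{fig:lan}. This requires tracking the successive images of the $c_i$'s under the conjugating diffeomorphisms — each twist $t_\beta$ alters a curve $\alpha$ only in a neighborhood of $\beta$, so the computation is explicit but must be done carefully. The three distinct lantern positions obtained this way are what will later permit six independent lantern substitutions in the global monodromy $\varrho=(c_5c_4c_3c_2c_1)^6$, so distinctness is essential.
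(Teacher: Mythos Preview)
Your overall strategy --- rearrange $W$ inside $M_2$ using commutativity, braid moves, and conjugation until a lantern configuration becomes visible --- matches the paper's, and you even name the relevant pairs $c_1c_3$, $c_3c_5$, $c_1c_5$. The gap is that you then aim at the wrong side of the lantern identity. The relation reads $t_\gamma t_\beta t_\alpha = t_{\delta_1} t_{\delta_2} t_{\delta_3} t_{\delta_4}$, and you propose to exhibit the \emph{three-curve} side as a consecutive subword. But the interior curves $\alpha,\beta,\gamma$ are not among the $c_i$, and on a four-holed sphere they pairwise intersect in \emph{two} points, not one as you claim; producing them as letters of a conjugate of $W$ would require exactly the curve-tracking you flag as the ``main obstacle'' and leave undone. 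Your suggestion that the boundary curves $\delta_i$ might themselves be ``obtained by Dehn-twisting a $c_j$ along an adjacent $c_{j\pm 1}$'' is likewise off target.

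The paper targets the \emph{four-curve} (boundary) side instead. Cutting $\Sigma_2$ along any two of the pairwise disjoint curves $c_1,c_3,c_5$ yields a four-holed sphere whose boundary consists of two parallel copies each of the chosen $c_i$ and $c_j$, for $(i,j)\in\{(1,5),(1,3),(3,5)\}$. Since $W$ is already a word in the $c_i$, one only needs to slide letters --- using commutativity and the elementary move $ab = {}_a(b)\cdot a$ --- until a block $c_i^{2} c_j^{2}$ sits consecutively; the paper does this in two or three lines per case, with no braid relations at all. The new curves $x,\delta,k,h,\bar k,\bar h$ of Figures~\ref{fig:Special1} and~\ref{fig:Special2} enter only \emph{after} the substitution is performed. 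Redirect your plan to the boundary side and the obstacle disappears.
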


\begin{figure}[ht]
\begin{center}
\resizebox{!}{3.5cm}{\includegraphics{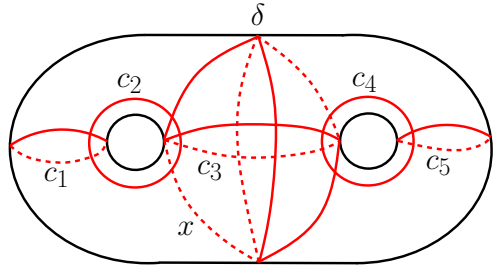}}
\caption{The Curves $c_1$, $c_2$, $c_3$, $c_4$, and $c_5$}
\label{fig:Sigma2}
\end{center}
\end{figure}

\begin{figure}[ht]
\begin{center}
\includegraphics[scale=.41]{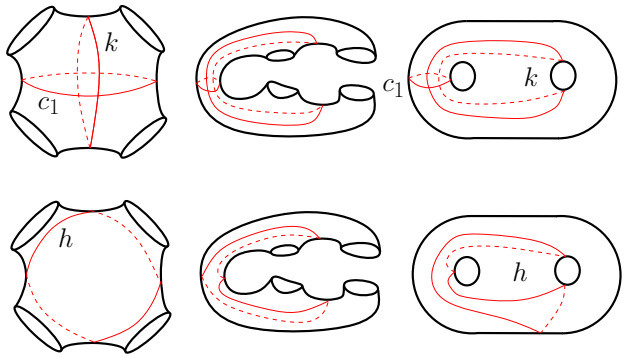}
\caption{Special Curves $k$, $h$}
\label{fig:Special1}
\end{center}
\end{figure}

\begin{figure}[ht]
\begin{center}
\includegraphics[scale=.41]{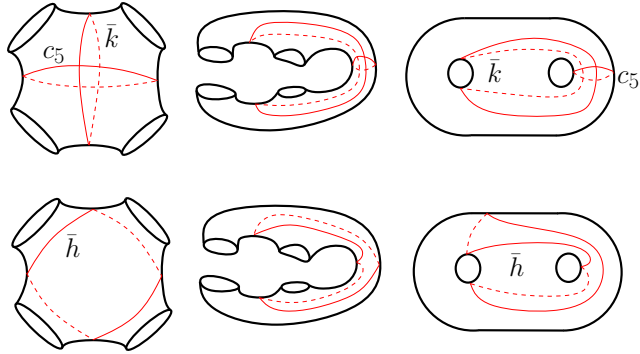}
\caption{Special Curves $\bar{k}$, $\bar{h}$}
\label{fig:Special2}
\end{center}
\end{figure}

\begin{proof}

Below we denote the lantern relation substitution, the braid relation substitution, the conjugation, and the arrangement using the commutativity by $ \overset{L} {\rightarrow} \ $ , $ \overset{B} {\rightarrow} \ $ , $ \overset{C} {\rightarrow} \ $,  $\sim$ respectively. For the convenience of the reader, we have highlighted the changes that occur in each step. \\

Let us consider the following cases\\

\begin{itemize}

\item Making the lantern substitution $c_{5} c_{1}^2 c_{5}$ for $\ c_{3} \delta x $.

 $ \textcolor{BrickRed}{c_{5}}c_{4}c_{3}c_{2}c_{1}c_{5}c_{4}c_{3}c_{2}\textcolor{BrickRed}{c_{1}}$ \\
$ \sim \ {}_{c_{5}}(c_{4}) \cdot c_{3}c_{2}c_{5}c_{1}\textcolor{BrickRed}{c_{5}c_{1}}c_{4}c_{3}\cdot{}_{c_{1}^{-1}}(c_{2})$ 
\\
$ \sim \ {}_{c_{5}}(c_{4}) \cdot c_{3}c_{2}\cdot \textcolor{BrickRed}{c_{5}^{2}c_{1}^{2}} \cdot c_{4}c_{3}\cdot{}_{c_{1}
^{-1}}(c_{2})$ \\
$ \overset{L} {\rightarrow} \ {}_{c_{5}}(c_{4}) \cdot c_{3}c_{2} \cdot c_{3} \delta x \cdot c_{4}c_{3}
\cdot{}_{c_{1}^{-1}}(c_{2})$ \\

\item Making the lantern substitution $c_{1} c_{3} c_{1} c_{3}$ for $ \bar{k} \bar{h} c_{5}$.

 $c_{5}c_{4}c_{3}c_{2}c_{1}\textcolor{BrickRed}{c_{5}c_{4}}c_{3}c_{2}c_{1}$ \\
$ \sim \ c_{5} c_{4} c_{5} \textcolor{BrickRed}{c_{3}} c_{2} c_{4} c_{1} c_{3} c_{2} \textcolor{BrickRed}{c_{1}}$ \\
$ \sim \ c_{5} c_{4} c_{5} \cdot {}_{c_{3}} (c_{2} c_{4}) \cdot \textcolor{BrickRed}{c_{1}^2 c_{3}^2}  \cdot {}_{c_{1}^{-1}}(c_{2}) $ \\
$ \overset{L} {\rightarrow} \ c_{5} c_{4} c_{5} \cdot {}_{c_{3}} (c_{2} c_{4}) \cdot \bar{k} \bar{h} c_{5} \cdot  {}_{c_{1}^{-1}}(c_{2}) $ \\

\item Making the lantern substitution $c_{3} c_{5}^2 c_{3}$ for $c_{1}kh$.

 $\textcolor{BrickRed}{c_{5}}c_{4}c_{3}c_{2}\textcolor{BrickRed}{c_{1}}c_{5}c_{4}\textcolor{BrickRed}{c_{3}}c_{2}c_{1}$ \\
$ \sim \ {}_{c_{5}} (c_{4}) \cdot \textcolor{BrickRed}{c_{5} c_{3}} c_{2} c_{5} c_{3} \cdot (c_{4})_{c_3} \cdot  c_{1} c_{2} c_{1}$ \\
$ \sim \ {}_{c_{5}} (c_{4}) \cdot {}_{c_{3}} (c_{2}) \cdot \textcolor{BrickRed}{c_{3}^2 c_{5}^2} \cdot {}_{c_{3}^{-1}}(c_{4}) \cdot  c_{1} c_{2} c_{1}$ \\
$ \overset{L} {\rightarrow} \ {}_{c_{5}} (c_{4}) \cdot {}_{c_{3}} (c_{2}) \cdot c_{1}  k h \cdot {}_{c_{3}^{-1}}(c_{4}) \cdot c_{1} c_{2} c_{1}$ \\
\end{itemize}

\end{proof}

Applying the above lemma several times, we next show how the word $\varrho = (c_{5}c_{4}c_{3}c_{2}c_{1})^6 = 1$ can be conjugated to contain six lantern relation.

\begin{lem}\label{sixl} The global monodromy of genus two Lefschetz fibration on $K3\#2\CPb$ over $S^{2}$ given by the relation $\varrho = (c_{5}c_{4}c_{3}c_{2}c_{1})^6 = 1$ can be conjugated and braid substituted to contain six lantern relations.
\end{lem}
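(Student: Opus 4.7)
The plan is to iterate Lemma \ref{threel} on successive pairs of copies of the base word $W = c_5c_4c_3c_2c_1$ inside $\varrho = W^6$. I would first partition $\varrho$ into three non-overlapping double-word blocks $D_1 = W_1W_2$, $D_2 = W_3W_4$, $D_3 = W_5W_6$, each of which is precisely the double word treated in Lemma \ref{threel}. Since that lemma provides three inequivalent ways to expose a lantern inside a double word (using the subconfigurations $c_5c_1^2c_5$, $c_1c_3c_1c_3$, or $c_3c_5^2c_3$), a natural first pass is to apply Case 1 to $D_1$, Case 2 to $D_2$, and Case 3 to $D_3$, performing the three corresponding lantern substitutions. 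After these three substitutions, the resulting word has length $30 - 3 = 27$ and contains three explicit lantern substitutions.

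To extract the remaining three lanterns, I would examine the three interleaved double words $W_2W_3$, $W_4W_5$, $W_6W_1$ (cyclically), each of which straddles the boundary between two of the blocks $D_i$. The key observation is that each of the three lantern substitutions from Lemma \ref{threel} replaces four Dehn twists by three, and among the three new twists one is always a Dehn twist along some $c_j$ with $j \in \{1,3,5\}$ (namely $c_3$, $c_5$, or $c_1$ in Cases 1, 2, 3 respectively). This reintroduced generator can be combined with adjacent Dehn twists in the next block via further conjugations and braid relations to produce one of the three canonical lantern configurations again. Performing three more rounds of conjugations, braid substitutions, and lantern substitutions across the three block boundaries then yields a total of six lantern relations, reducing the word length to $24$.

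The main obstacle is the bookkeeping. Each conjugation, braid substitution, and lantern substitution alters the positive relator, so one must verify that the configuration of Dehn twists required for the next substitution actually persists. Concretely, for each of the six lantern substitutions one needs an explicit chain of conjugations (of the form $f \cdot t_\alpha \cdot f^{-1} = t_{f(\alpha)}$) and braid relations $t_\alpha t_\beta t_\alpha = t_\beta t_\alpha t_\beta$ that bring the relevant four Dehn twists into one of the three canonical forms of Lemma \ref{threel}. The cleanest way to present the argument is as a single long chain of word manipulations in $M_2$, organized into six phases (one per lantern), each ending in an application of the lantern relation and introducing the special curves $k, h, \bar k, \bar h, \delta, x$ from Figures \ref{fig:Special1} and \ref{fig:Special2} as new vanishing cycles. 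By Theorem \ref{bd}, the resulting word then corresponds to six disjoint rational blowdowns of $C_2$ configurations in $K3\#2\CPb$, matching the Euler characteristic and signature counts needed in the proof of Theorem \ref{theorem1}.
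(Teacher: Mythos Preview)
Your strategy matches the paper's approach: partition $\varrho = (W)^6$ into three double-word blocks, apply Cases 1, 2, 3 of Lemma~\ref{threel} to the respective blocks to obtain the first three lanterns, then harvest three more by recombining leftover and newly-produced $c_j$'s (with $j\in\{1,3,5\}$) across the block boundaries via further conjugation and braid moves. That is exactly what the paper does, and your observation that each lantern output contains a fresh $c_3$, $c_5$, or $c_1$ is the mechanism the paper exploits.

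Two points of caution. First, your phrasing ``interleaved double words $W_2W_3$, $W_4W_5$, $W_6W_1$'' is misleading: after the first three substitutions the subwords $W_i$ no longer exist as such, so the second round is not another application of Lemma~\ref{threel} to intact double words but rather a bespoke rearrangement of the transformed word. In the paper, lantern~4 uses the $c_3$ reintroduced by Case~1 together with an original $c_3$ and the pair $c_5^2$ extracted from $c_5c_4c_5$ in block~2; lanterns~5 and~6 require cyclic rotation of the whole relator and further braid moves before the configurations $c_1^2c_3^2$ and $c_5^2c_3^2$ materialize. Second, and more substantively, your proposal is a plan rather than a proof: the entire content of this lemma is the explicit chain of moves, and the assertion that ``three more rounds \ldots\ then yields a total of six lantern relations'' is precisely the thing to be verified. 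The paper supplies roughly twenty lines of explicit word manipulation to certify this; without that computation (or an equivalent one) the argument is incomplete.
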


\begin{proof}

We start with the identity word: $(c_{5}c_{4}c_{3}c_{2}c_{1})^6 = 1 $ \\

By applying three identities of Lemma~\ref{threel}, we can convert the word $(c_{5}c_{4}c_{3}c_{2}c_{1})^6 = ((c_{5}c_{4}c_{3}c_{2}c_{1})^2)^3$ into the following word:

\medskip

$ \sim \ {}_{c_{5}}(c_{4}) \cdot c_{3}c_{2} \cdot \textcolor{BrickRed}{c_{3}}  \delta x \cdot c_{4}c_{3}
\cdot {}_{c_{1}^{-1}}(c_{2}) \cdot c_{5} c_{4} \textcolor{BrickRed}{c_{5}} \cdot {}_{c_{3}} (c_{2} c_{4}) \cdot \bar{k} 
\bar{h} c_{5} \cdot {}_{c_{1}^{-1}}(c_{2}) \cdot {}_{c_{5}} (c_{4}) \cdot {}_{c_{3}} (c_{2}) \cdot c_
{1}  k h \cdot {}_{c_{3}^{-1}}(c_{4}) \cdot  c_{1} c_{2} c_{1}$ \\

$ \sim \ {}_{c_{5}}(c_{4}) \cdot c_{3}c_{2} \cdot {}_{c_{3}} ( \delta x ) \cdot {}_{c_{3}} (c_{4}) \cdot c_{3}^2
\cdot {}_{c_{1}^{-1}}(c_{2}) \cdot c_{5}^2 \cdot {}_{c_{5}^{-1}}(c_{4}) \cdot {}_{c_{3}} (c_{2} c_{4}) \cdot \bar{k} 
\bar{h} c_{5} \cdot {}_{c_{1}^{-1}}(c_{2}) \cdot {}_{c_{5}} (c_{4}) \cdot {}_{c_{3}} (c_{2}) \cdot c_
{1}  k h \cdot {}_{c_{3}^{-1}}(c_{4}) \cdot  \textcolor{BrickRed}{c_{1} c_{2} c_{1}}$ \\

$ \overset{B} {\rightarrow} \ {}_{c_{5}}(c_{4}) \cdot c_{3}c_{2} \cdot {}_{c_{3}} ( \delta x ) \cdot {}_{c_{3}} (c_{4}) \cdot c_{3}^2
\cdot {}_{c_{1}^{-1}}(c_{2}) \cdot \textcolor{BrickRed}{c_{5}^2 }\cdot {}_{c_{5}^{-1}}(c_{4}) \cdot {}_{c_{3}} (c_{2} c_{4}) \cdot \bar{k} 
\bar{h} c_{5} \cdot {}_{c_{1}^{-1}}(c_{2}) \cdot {}_{c_{5}} (c_{4}) \cdot {}_{c_{3}} (c_{2}) \cdot c_
{1}  k h \cdot {}_{c_{3}^{-1}}(c_{4}) \cdot  c_{2} c_{1} c_{2}$ \\

$ \sim \ {}_{c_{5}}(c_{4}) \cdot c_{3}c_{2} \cdot {}_{c_{3}} ( \delta x ) \cdot {}_{c_{3}} (c_{4}) \cdot \textcolor{BrickRed}{c_{3}^2c_{5}^2} \cdot {}_{c_{1}^{-1}}(c_{2}) \cdot {}_{c_{5}^{-1}}(c_{4}) \cdot {}_{c_{3}} (c_{2} c_{4}) \cdot \bar{k} 
\bar{h} c_{5} \cdot {}_{c_{1}^{-1}}(c_{2}) \cdot {}_{c_{5}} (c_{4}) \cdot {}_{c_{3}} (c_{2}) \cdot c_
{1}  k h \cdot {}_{c_{3}^{-1}}(c_{4}) \cdot  c_{2} c_{1} c_{2}$ \\

 $ \overset{L} {\rightarrow} \ {}_{c_{5}}(c_{4}) \cdot c_{3}c_{2} \cdot {}_{c_{3}} ( \delta x ) \cdot {}_{c_{3}} (c_{4}) \cdot \textcolor{BrickRed}{c_{1}}  k h 
\cdot {}_{c_{1}^{-1}}(c_{2}) \cdot {}_{c_{5}^{-1}}(c_{4}) \cdot {}_{c_{3}} (c_{2} c_{4}) \cdot \bar{k} 
\bar{h} c_{5} \cdot {}_{c_{1}^{-1}}(c_{2}) \cdot {}_{c_{5}} (c_{4}) \cdot {}_{c_{3}} (c_{2}) \cdot c_
{1}  k h \cdot {}_{c_{3}^{-1}}(c_{4}) \cdot  c_{2} c_{1} c_{2}$ \\

$ \sim \ {}_{c_{5}}(c_{4}) \cdot c_{3}c_{2} c_{1} \cdot {}_{c_{3}} ( \delta x ) \cdot {}_{c_{3}} (c_{4}) \cdot k h 
\cdot {}_{c_{1}^{-1}}(c_{2}) \cdot {}_{c_{5}^{-1}}(c_{4}) \cdot {}_{c_{3}} (c_{2} c_{4}) \cdot \bar{k} 
\bar{h} c_{5} \cdot {}_{c_{1}^{-1}}(c_{2}) \cdot {}_{c_{5}} (c_{4}) \cdot {}_{c_{3}} (c_{2}) \cdot c_
{1}  k h \cdot {}_{c_{3}^{-1}}(c_{4}) \cdot  c_{2} \textcolor{BrickRed}{c_{1} c_{2}}$ \\

$ \overset{C} {\rightarrow} \ \textcolor{BrickRed}{c_{1} c_{2}} \cdot {}_{c_{5}}(c_{4}) \cdot c_{3}c_{2} c_{1} \cdot {}_{c_{3}} ( \delta x ) \cdot {}_{c_{3}} (c_{4}) \cdot   k h 
\cdot {}_{c_{1}^{-1}}(c_{2}) \cdot {}_{c_{5}^{-1}}(c_{4}) \cdot {}_{c_{3}} (c_{2} c_{4}) \cdot \bar{k} 
\bar{h} c_{5} \cdot {}_{c_{1}^{-1}}(c_{2}) \cdot {}_{c_{5}} (c_{4}) \cdot {}_{c_{3}} (c_{2}) \cdot c_
{1}  k h \cdot {}_{c_{3}^{-1}}(c_{4}) \cdot  c_{2}$ \\

$ \sim \ {}_{c_{5}}(c_{4}) \cdot c_{1}\textcolor{BrickRed}{  c_{2} c_{3}c_{2}} c_{1} \cdot {}_{c_{3}} ( \delta x ) \cdot {}_{c_{3}} (c_{4}) \cdot   k h 
\cdot {}_{c_{1}^{-1}}(c_{2}) \cdot {}_{c_{5}^{-1}}(c_{4}) \cdot {}_{c_{3}} (c_{2} c_{4}) \cdot \bar{k} 
\bar{h} c_{5} \cdot {}_{c_{1}^{-1}}(c_{2}) \cdot {}_{c_{5}} (c_{4}) \cdot {}_{c_{3}} (c_{2}) \cdot c_
{1}  k h \cdot {}_{c_{3}^{-1}}(c_{4}) \cdot  c_{2}$ \\

$ \overset{B} {\rightarrow} \   {}_{c_{5}}(c_{4}) \cdot c_{1}  c_{3} c_{2}\textcolor{BrickRed}{c_{3} c_{1}} \cdot {}_{c_{3}} ( \delta x ) \cdot {}_{c_{3}} (c_{4}) \cdot   k h 
\cdot {}_{c_{1}^{-1}}(c_{2}) \cdot {}_{c_{5}^{-1}}(c_{4}) \cdot {}_{c_{3}} (c_{2} c_{4}) \cdot \bar{k} 
\bar{h} c_{5} \cdot {}_{c_{1}^{-1}}(c_{2}) \cdot {}_{c_{5}} (c_{4}) \cdot {}_{c_{3}} (c_{2}) \cdot c_
{1}  k h \cdot {}_{c_{3}^{-1}}(c_{4}) \cdot  c_{2}$ \\

$ \sim \  {}_{c_{5}}(c_{4}) \cdot \textcolor{BrickRed}{c_{1}^2  c_{3}^2} \cdot {}_{c_{1}^{-1}c_{3}^{-1}}(c_{2})  \cdot{}_{c_{3}} ( \delta x ) \cdot {}_{c_{3}} (c_{4}) \cdot   k h 
\cdot {}_{c_{1}^{-1}}(c_{2}) \cdot {}_{c_{5}^{-1}}(c_{4}) \cdot {}_{c_{3}} (c_{2} c_{4}) \cdot \bar{k} 
\bar{h} c_{5} \cdot {}_{c_{1}^{-1}}(c_{2}) \cdot {}_{c_{5}} (c_{4}) \cdot {}_{c_{3}} (c_{2}) \cdot c_
{1}  k h \cdot {}_{c_{3}^{-1}}(c_{4}) \cdot  c_{2}$ \\

$ \overset{L} {\rightarrow} \ {}_{c_{5}}(c_{4}) \cdot \bar{k} \bar{h} \textcolor{BrickRed}{c_{5}} \cdot {}_{c_{1}^{-1}c_{3}^{-1}}(c_{2})  \cdot{}_{c_{3}} ( \delta x ) \cdot {}_{c_{3}} (c_{4}) \cdot   k h 
\cdot {}_{c_{1}^{-1}}(c_{2}) \cdot {}_{c_{5}^{-1}}(c_{4}) \cdot {}_{c_{3}} (c_{2} c_{4}) \cdot \bar{k} \bar{h} c_{5} \cdot {}_{c_{1}^{-1}}(c_{2}) \cdot {}_{c_{5}} (c_{4}) \cdot {}_{c_{3}} (c_{2}) \cdot c_{1}  k h \cdot {}_{c_{3}^{-1}}(c_{4}) \cdot  c_{2}$ \\

$ \sim \ {}_{c_{5}}(c_{4}) \cdot \bar{k} \bar{h} \cdot {}_{c_{1}^{-1}c_{3}^{-1}}(c_{2})  \cdot{}_{c_{3}} ( \delta x ) \cdot {}_{c_{3} c_{5}} (c_{4}) \cdot   k h 
\cdot {}_{c_{1}^{-1}}(c_{2}) \cdot c_{4} \textcolor{BrickRed}{c_{5}} \cdot {}_{c_{3}} (c_{2} c_{4}) \cdot \bar{k} \bar{h} c_{5} \cdot {}_{c_{1}^{-1}}(c_{2}) \cdot {}_{c_{5}} (c_{4}) \cdot {}_{c_{3}} (c_{2}) \cdot c_{1}  k h \cdot {}_{c_{3}^{-1}}(c_{4}) \cdot  c_{2}$ \\

$ \sim \ {}_{c_{5}}(c_{4}) \cdot \bar{k} \bar{h} \cdot {}_{c_{1}^{-1}c_{3}^{-1}}(c_{2})  \cdot{}_{c_{3}} ( \delta x ) \cdot {}_{c_{3} c_{5}} (c_{4}) \cdot   k h 
\cdot {}_{c_{1}^{-1}}(c_{2}) \cdot\textcolor{BrickRed}{ c_{4}} \cdot {}_{c_{3}c_{5}} (c_{2} c_{4}) \cdot {}_{c_{5}} ( \bar{k} \bar{h} ) \cdot c_{5}^2 \cdot {}_{c_{1}^{-1}}(c_{2}) \cdot {}_{c_{5}} (c_{4}) \cdot {}_{c_{3}} (c_{2}) \cdot c_{1}  k h \cdot {}_{c_{3}^{-1}}(c_{4}) \cdot  c_{2}$ \\

$ \sim \ \textcolor{BrickRed}{c_{4}} \cdot {}_{c_{4}^{-1}} \textbf{(}{}_{c_{5}}(c_{4}) \cdot \bar{k} \bar{h} \cdot {}_{c_{1}^{-1}c_{3}^{-1}}(c_{2})  \cdot{}_{c_{3}} ( \delta x ) \cdot {}_{c_{3} c_{5}} (c_{4}) \cdot   k h 
\cdot {}_{c_{1}^{-1}}(c_{2})\textbf{)} \cdot  {}_{c_{3}c_{5}} (c_{2} c_{4}) \cdot {}_{c_{5}} ( \bar{k} \bar{h} ) \cdot c_{5}^2 \cdot {}_{c_{1}^{-1}}(c_{2}) \cdot {}_{c_{5}} (c_{4}) \cdot {}_{c_{3}} (c_{2}) \cdot c_{1}  k h \cdot {}_{c_{3}^{-1}}(c_{4}) \cdot c_{2} $ \\

$ \overset{C} {\rightarrow} \ {}_{c_{4}^{-1}} \textbf{(}{}_{c_{5}}(c_{4}) \cdot \bar{k} \bar{h} \cdot {}_{c_{1}^{-1}c_{3}^{-1}}(c_{2})  \cdot{}_{c_{3}} ( \delta x ) \cdot {}_{c_{3} c_{5}} (c_{4}) \cdot   k h 
 \cdot {}_{c_{1}^{-1}}(c_{2})\textbf{)} \cdot  {}_{c_{3}c_{5}} (c_{2} c_{4}) \cdot {}_{c_{5}} ( \bar{k} \bar{h} ) \cdot c_{5}^2 \cdot {}_{c_{1}^{-1}}(c_{2}) \cdot {}_{c_{5}} (c_{4}) \cdot {}_{c_{3}} (c_{2}) \cdot c_{1}  k h \cdot {}_{c_{3}^{-1}}(c_{4}) \cdot \textcolor{BrickRed}{c_{2}  c_{4}} $ \\

$ \sim \ {}_{c_{4}^{-1}} \textbf{(}{}_{c_{5}}(c_{4}) \cdot \bar{k} \bar{h} \cdot {}_{c_{1}^{-1}c_{3}^{-1}}(c_{2})  \cdot{}_{c_{3}} ( \delta x ) \cdot {}_{c_{3} c_{5}} (c_{4}) \cdot   k h 
\cdot {}_{c_{1}^{-1}}(c_{2})\textbf{)} \cdot  {}_{c_{3}c_{5}} (c_{2} c_{4}) \cdot {}_{c_{5}} ( \bar{k} \bar{h} ) \cdot c_{5}^2 \cdot {}_{c_{1}^{-1}}(c_{2}) \cdot {}_{c_{5}} (c_{4}) \cdot {}_{c_{3}} (c_{2}) \cdot c_{1}  k h \cdot {}_{c_{3}^{-1}}(c_{4}) \cdot c_{4}  \textcolor{BrickRed}{c_{2}} $ \\

$ \overset{C} {\rightarrow} \ \textcolor{BrickRed}{c_{2}} \cdot {}_{c_{4}^{-1}} \textbf{(}{}_{c_{5}}(c_{4}) \cdot \bar{k} \bar{h} \cdot {}_{c_{1}^{-1}c_{3}^{-1}}(c_{2})  \cdot{}_{c_{3}} ( \delta x ) \cdot {}_{c_{3} c_{5}} \cdot   k h 
\cdot {}_{c_{1}^{-1}}(c_{2})\textbf{)} \cdot  {}_{c_{3}c_{5}} (c_{2} c_{4}) \cdot {}_{c_{5}} ( \bar{k} \bar{h} ) \cdot c_{5}^2 \cdot {}_{c_{1}^{-1}}(c_{2}) \cdot {}_{c_{5}} (c_{4}) \cdot {}_{c_{3}} (c_{2}) \cdot c_{1}  k h \cdot {}_{c_{3}^{-1}}(c_{4}) \cdot  c_{4} $ \\

$ \sim \ {}_{c_{2}}\textbf{(} {}_{c_{4}^{-1}} \textbf{(}{}_{c_{5}}(c_{4}) \cdot \bar{k} \bar{h} \cdot {}_{c_{1}^{-1}c_{3}^{-1}}(c_{2})  \cdot{}_{c_{3}} ( \delta x ) \cdot {}_{c_{3} c_{5}} (c_{4}) \cdot   k h \cdot {}_{c_{1}^{-1}}(c_{2})\textbf{)} \cdot {}_{c_{3}c_{5}} (c_{2} c_{4}) \cdot {}_{c_{5}} ( \bar{k} \bar{h} )\textbf{)}
 \cdot c_{2} c_{5}^2 \cdot {}_{c_{1}^{-1}}(c_{2}) \cdot {}_{c_{5}} (c_{4}) \cdot {}_{c_{3}} (c_{2}) \cdot c_{1}  k h \cdot \textcolor{BrickRed}{{}_{c_{3}^{-1}}(c_{4}) \cdot c_{4}}$ \\

$ \overset{B} {\rightarrow} \ {}_{c_{2}}\textbf{(} {}_{c_{4}^{-1}} \textbf{(}{}_{c_{5}}(c_{4}) \cdot \bar{k} \bar{h} \cdot {}_{c_{1}^{-1}c_{3}^{-1}}(c_{2})  \cdot{}_{c_{3}} ( \delta x ) \cdot {}_{c_{3} c_{5}} (c_{4}) \cdot   k h \cdot {}_{c_{1}^{-1}}(c_{2})\textbf{)} \cdot {}_{c_{3}c_{5}} (c_{2} c_{4}) \cdot {}_{c_{5}} ( \bar{k} \bar{h} )\textbf{)}
 \cdot \textcolor{BrickRed}{c_{2}} c_{5}^2 \cdot {}_{c_{1}^{-1}}(c_{2}) \cdot {}_{c_{5}} (c_{4}) \cdot {}_{c_{3}} (c_{2}) \cdot c_{1}  k h \cdot c_{4} c_{3} $ \\

$ \sim \ {}_{c_{2}}\textbf{(} {}_{c_{4}^{-1}} \textbf{(}{}_{c_{5}}(c_{4}) \cdot \bar{k} \bar{h} \cdot {}_{c_{1}^{-1}c_{3}^{-1}}(c_{2})  \cdot{}_{c_{3}} ( \delta x ) \cdot {}_{c_{3} c_{5}} (c_{4}) \cdot   k h \cdot {}_{c_{1}^{-1}}(c_{2})\textbf{)} \cdot {}_{c_{3}c_{5}} (c_{2} c_{4}) \cdot {}_{c_{5}} ( \bar{k} \bar{h} )\textbf{)}
 \cdot c_{5}^2 \cdot {}_{c_{2} c_{1}^{-1}}(c_{2}) \cdot {}_{c_{5}} (c_{4}) \cdot \textcolor{BrickRed}{  c_{2} \cdot {}_{c_{3}} (c_{2})  }\cdot c_{1}  k h \cdot c_{4} c_{3} $ \\

$ \overset{B} {\rightarrow} \ {}_{c_{2}}\textbf{(} {}_{c_{4}^{-1}} \textbf{(}{}_{c_{5}}(c_{4}) \cdot \bar{k} \bar{h} \cdot {}_{c_{1}^{-1}c_{3}^{-1}}(c_{2})  \cdot{}_{c_{3}} ( \delta x ) \cdot {}_{c_{3} c_{5}} (c_{4}) \cdot   k h \cdot {}_{c_{1}^{-1}}(c_{2})\textbf{)} \cdot {}_{c_{3}c_{5}} (c_{2} c_{4}) \cdot {}_{c_{5}} ( \bar{k} \bar{h} )\textbf{)}
 \cdot \textcolor{BrickRed}{c_{5}^2} \cdot {}_{c_{2} c_{1}^{-1}}(c_{2}) \cdot {}_{c_{5}} (c_{4}) \cdot c_{3} c_{2} \cdot c_{1}  k h \cdot c_{4} c_{3} $ \\

$ \sim \ {}_{c_{2}}\textbf{(} {}_{c_{4}^{-1}} \textbf{(}{}_{c_{5}}(c_{4}) \cdot \bar{k} \bar{h} \cdot {}_{c_{1}^{-1}c_{3}^{-1}}(c_{2})  \cdot{}_{c_{3}} ( \delta x ) \cdot {}_{c_{3} c_{5}} (c_{4}) \cdot   k h \cdot {}_{c_{1}^{-1}}(c_{2})\textbf{)} \cdot {}_{c_{3}c_{5}} (c_{2} c_{4}) \cdot {}_{c_{5}} ( \bar{k} \bar{h} )\textbf{)}
  \cdot {}_{c_{2} c_{1}^{-1}}(c_{2}) \cdot {}_{c_{5}^{3}} (c_{4}) \cdot c_{5}^2 c_{3} c_{2} \cdot c_{1}  k h \cdot c_{4} \textcolor{BrickRed}{c_{3}} $ \\

$ \sim \ {}_{c_{2}}\textbf{(} {}_{c_{4}^{-1}} \textbf{(}{}_{c_{5}}(c_{4}) \cdot \bar{k} \bar{h} \cdot {}_{c_{1}^{-1}c_{3}^{-1}}(c_{2})  \cdot{}_{c_{3}} ( \delta x ) \cdot {}_{c_{3} c_{5}} (c_{4}) \cdot   k h \cdot {}_{c_{1}^{-1}}(c_{2})\textbf{)} \cdot {}_{c_{3}c_{5}} (c_{2} c_{4}) \cdot {}_{c_{5}} ( \bar{k} \bar{h} )\textbf{)}
  \cdot {}_{c_{2} c_{1}^{-1}}(c_{2}) \cdot {}_{c_{5}^{3}} (c_{4}) \cdot \textcolor{BrickRed}{c_{5}^2 c_{3}^2}  \cdot {}_{c_{3}^{-1}}(c_{2}) \cdot c_{1}  k h \cdot {}_{c_{3}^{-1}}(c_{4})  $ \\

$ \overset{L} {\rightarrow} \ \textcolor{BrickRed}{{}_{c_{2}}}\textcolor{BrickRed}{\textbf{(} {}_{c_{4}^{-1}} \textbf{(}}{}_{c_{5}}(c_{4}) \cdot \bar{k} \bar{h} \cdot {}_{c_{1}^{-1}c_{3}^{-1}}(c_{2})  \cdot{}_{c_{3}} ( \delta x ) \cdot {}_{c_{3} c_{5}} (c_{4}) \cdot   k h \cdot {}_{c_{1}^{-1}}(c_{2})\textcolor{BrickRed}{\textbf{)}} \cdot {}_{c_{3}c_{5}} (c_{2} c_{4}) \cdot {}_{c_{5}} ( \bar{k} \bar{h} )\textcolor{BrickRed}{\textbf{)}}
  \cdot {}_{c_{2} c_{1}^{-1}}(c_{2}) \cdot {}_{c_{5}^{3}} (c_{4}) \cdot c_{1} k h  \cdot {}_{c_{3}^{-1}}(c_{2}) \cdot c_{1}  k h \cdot {}_{c_{3}^{-1}}(c_{4})  $ \\

After suitable conjugations, we simplify the long conjugations highlighted above to acquire the following word with $24$ right handed Dehn twists. \\

 $ \sim \ {}_{c_{4}^{-1}c_{5}}(c_{4}) \cdot {}_{c_{2}c_{4}^{-1}}(\bar{k} \bar{h}) \cdot {}_{c_{2}c_{4}^{-1}c_{1}^{-1}c_{3}^{-1}}(c_{2})  \cdot {}_{c_{2}c_{4}^{-1}c_{3}}( \delta x ) \cdot {}_{c_{2}c_{4}^{-1}c_{3}c_{5}}(c_{4}) \cdot   {}_{c_{2}c_{4}^{-1}}(k h) \cdot {}_{c_{2}c_{1}^{-1}}(c_{2}) \cdot {}_{c_{2}c_{3}c_{5}}(c_{2} c_{4}) \cdot {}_{c_{2}c_{5}}( \bar{k} \bar{h} ) \cdot {}_{c_{2} c_{1}^{-1}}(c_{2}) \cdot {}_{c_{5}^{3}} (c_{4}) \cdot c_{1} k h  \cdot {}_{c_{3}^{-1}}(c_{2}) \cdot c_{1}  k h \cdot {}_{c_{3}^{-1}}(c_{4})  $ \\

Using a hyperelliptic signature formula for genus two Lefschetz fibration or by simple checking, we determine that $18$ of these Dehn twists are along nonseparating and $6$ are along separating curves. Notice that each lantern relation introduces one separating vanishing cycle. 

\end{proof}

Now we give a proof of main Theorem~\ref{theorem1}. 

\begin{proof} Let $X$ be the symplectic $4$-manifold obtained from $K3\#2\CPb$ by applying six lantern relation as in Lemma~\ref{sixl} above. Using Lemma~\ref{thm:rb}, we compute that
 
\begin{eqnarray*}
e(X) &=& e(K3\#2\CPb) - 6 = 26 - 6 = 20,\\
\sigma(X) &=& \sigma (K3\#2\CPb) + 6 = (-18) + 6 = -12.
\end{eqnarray*}

Freedman's theorem (cf.\ \cite{freedman}) implies that $X$ is homeomorphic to $3\CP\# 15\CPb$, once we show that $\pi_1(X)=1$. 

\begin{figure}[ht]
\begin{center}
\includegraphics[scale=.6]{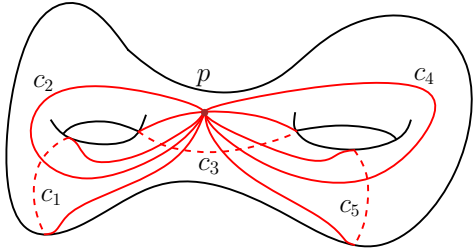}
\caption{The Loops $c_1$, $c_2$, $c_3$, $c_4$, and $c_5$}
\label{fig:basepoint}
\end{center}
\end{figure}

Let us denote a regular fiber of the genus two Lefschetz fibration on $X$ as $\Sigma_2$ and the generators of fundamental group of $\Sigma_2$ as ${c_{1}}$, ${c_{2}}$, ${c_{3}}$, ${c_{4}}$, and ${c_{5}}$. We continue to use the notation of the Figure~\ref{fig:Sigma2}, by choosing a base point $p$ as in $\Sigma_2$ as in Figure~\ref{fig:basepoint}. From the long exact homotopy sequence (see, for example\cite{GS}, page 290), we deduce that

\begin {center} 

$\pi_{1}(\Sigma_{2}) \longrightarrow \pi_{1}(X) \longrightarrow \pi_{1}(S^{2}) = 1$

\end{center}

\noindent Moreover,  $\pi_1(X)$ is finitely presented group and generated by the images of the standard generators, which we again denote by ${c_{1}}$, ${c_{2}}$, ${c_{3}}$, ${c_{4}}$, and ${c_{5}}$, under the surjection map. Furthermore, the loops $\beta_{1}$, $\beta_{2}$, $\beta_{3}$, $\cdots$, $\beta_{23}$, $\beta_{24}$ on the regular fiber are all nullhomotopic in $\pi_{1}(X)$, where $\beta_{1}$, $\beta_{2}$, $\beta_{3}$, $\cdots$, $\beta_{23}$, $\beta_{24}$ are the curves corresponding to $24$ Dehn twists $D_i$ of the genus two Lefschetz fibration on $X$ given below 

\medskip

\begin{gather}\label{eq: Dehn twists} \nonumber
D_1 = {}_{c_{4}^{-1}c_{5}}(c_{4}), \ \ D_2 = {}_{c_{2}c_{4}^{-1}} (\bar{k}), \ \ D_3 = {}_{c_{2}c_{4}^{-1}} (\bar{h}), \ \ D_4 = {}_{c_{2}c_{4}^{-1}c_{1}^{-1}c_{3}^{-1}} (c_{2}), \ \ D_5 = {}_{c_{2}c_{4}^{-1}c_{3}} (\delta), \\ \nonumber D_6 = {}_{c_{2}c_{4}^{-1}c_{3}} (x), \ \ D_7 = {}_{c_{2}c_{4}^{-1} c_{3}c_{5}} (c_{4}), \ \ D_8 = {}_{c_{2}c_{4}^{-1}}(k), \ \ D_9 = {}_{c_{2}c_{4}^{-1}} (h), \ \ D_{10} = {}_{c_{2} c_{1}^{-1}}(c_{2}), \\ \nonumber D_{11} = {}_{c_{2}c_{3}c_{5}} (c_{2}), \ \ D_{12} = {}_{c_{2}c_{3}c_{5}} (c_{4}), \ \ D_{13} = {}_{c_{2}c_{5}}(\bar{k}), \ \ D_{14} = {}_{c_{2}c_{5}} (\bar{h}), \ \ D_{15} = {}_{c_{2} c_{1}^{-1}}(c_{2}), \\ \nonumber D_{16} = {}_{c_{5}^{3}} (c_{4}), \ \ D_{17} = c_1, \ \ D_{18} = k, 
\ \ D_{19} = h, \ \ D_{20} = {}_{c_{3}^{-1}} (c_{2}), \\ \nonumber D_{21} = c_1, \ \ D_{22} = k, \ \ D_{23} = h, \ \ D_{24} = {}_{c_{3}^{-1}} (c_{4}). \nonumber
\end{gather}

\medskip

We observe that $\beta_{21} = 1$ in $\pi_{1}(X)$, implies that ${c_{1}}$ is trivial element in $\pi_{1}(X)$. Using $\beta_{10} = \beta_{20} = 1$, we obtain ${c_{2}}$ and ${c_{3}}$ are trivial in $\pi_{1}(X)$. Furthermore, the relations $\beta_{5} = \beta_{22} = \beta_{23} = \beta_{24} = 1$ imply that the elements ${c_{4}}$ and ${c_{5}}$ are null-homotopic and thus $X$ is simply connected.  

Using the blow up formula for the Seiberg-Witten function \cite{FS2}, we have $SW_{K3\#2\,\CPb}$ 
$= SW_{K3} \cdot \prod_{j=1}^{2}(e^{E_{i}} + e^{-E_{i}}) = (e^{E_{1}} + e^{-E_{1}})(e^{E_{2}} + e^{-E_{2}})$, where $E_{i}$ is an exceptional class coming from the $i^{th}$ blow up. Consequently, it follows from this formula, the set of basic classes of $K3\#2\,\CPb$ are given by $\pm E_{1} \pm E_{2}$, and the Seiberg-Witten invariants on these classes are $\pm 1$. Moreover, after performing two rational blowdowns along a copy of the configuration $C_2$, the resulting manifold is diffeomorphic to $K3$ by Lemma~\ref{n=2}. Thus, the only basic class is the zero class, which descends from the top classes ${\pm (E_{1} + E_{2})}$ in $K3\#2\,\CPb$. Next, using the Corollary 8.6 in~\cite{FS1}, we observe that $X$ has Seiberg-Witten simple type. Furthermore, by applying Theorem~\ref{SW1} and Theorem~\ref{SW2}, we completely determine the Seiberg-Witten invariants of $X$ using the basic classes and invariants of $K3$: Up to sign the symplectic manifold $X$ has only one basic class which descends from the canonical class of $K3$. By Theorem ~\ref{SW2} (or by Taubes theorem \cite{taubes}), the value of the Seiberg-Witten function on these classes, $\pm K_{X}$, are ${ \pm 1}$. 

Alternatively, we can determine the Seiberg-Witten invariants of $X$ directly by computing the algebraic intersection number of the classes $\pm E_{1} \pm E_{2}$, with the classes of $-4$ spheres of six $C_2$ configurations. Observe that these $-4$ spheres are the components of the singular fibers of $K3\#2\,\CPb$. Furthemore, by considering three regions on the genus two surface, where the rational blowdowns are performed (see Figures~\ref{fig:Special1} and \ref{fig:Special2}), and the location of the two points where we did blow up the genus two pencil (see Figures~\ref{fig:braided} and \ref{fig:genustwopencil}), we compute the intersection numbers as follows: Let $S$ denote the homology class of $-4$ sphere of $C_{2}$. We have $S \cdot E_{1} = S \cdot E_{2} = 1$. Consequently, $S \cdot \pm (E_{1} + E_{2}) = \pm 2$ and $S \cdot \pm (E_{1} - E_{2}) = 0$. Since among the four classes $\pm E_{1} \pm E_{2}$ only $E_{1} + E_{2}$ and $-(E_{1} + E_{2})$ have intersection $\pm 2$ with $-4$ spheres of $C_2$, it follows from Theorem~\ref{SW1} that these are only two classes that descend to $X$.          

Next, we apply the connected sum theorem for the Seiberg-Witten invariant and show that $SW$ function is trivial for $3\CP\# 15\CPb$. Since the Seiberg-Witten invariants are diffeomorphism invariants, we conclude that $X$ is not diffeomorphic to $3\CP\# 15\CPb$. 

The minimality of $X$ follows from the the fact that $X$ has no two basic classes $K$ and $K'$ such that $(K - K')^2 = -4$. Notice that $(K_{X} - (-K_{X}))^2 = 4({K_X}^{2}) = 16$ in our case.

\end{proof}

\begin{thm}\label{theorem2} There exist an infinite family of irreducible symplectic\/  and an infinite family of irreducible non-symplectic\/ pairwise non-diffeomorphic $4$-manifolds all homeomorphic to $X$.
\end{thm}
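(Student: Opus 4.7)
The plan is to apply Fintushel--Stern knot surgery of Theorem~\ref{thm:knotsurgery} along a suitable homologically essential self-intersection zero torus $T \subset X$, and then distinguish the resulting manifolds via the Alexander polynomial formula for Seiberg--Witten invariants.

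First I would locate the torus $T$ inside $X$. As observed in Example~\ref{Ex}, the elliptic description $E(2) = E(1) \#_{\mathbb{T}^2} E(1)$ yields two rim tori in $K3$, and these are homologically orthogonal to the class $h + h' - E_1 - E_2$ of the genus two fiber of the Lefschetz pencil on $K3\#2\CPb$; in particular a rim torus can be isotoped to be disjoint from a regular genus two fiber. The six $-4$ spheres on which the lantern substitutions / rational blowdowns are performed arise as components of singular fibers of the genus two fibration on $K3\#2\CPb$, so after a small perturbation a rim torus $T$ remains disjoint from all six $C_2$ configurations and therefore descends to a homologically essential torus of self-intersection zero in $X$. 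Moreover $T$ already sits inside a Gompf nucleus $N(2) \subset K3$ coming from a cusp fiber of one of the $E(1)$ summands; since the lantern substitutions act away from this nucleus, $T$ lies in a cusp neighborhood of $X$. Combined with $\pi_{1}(X) = 1$ from Theorem~\ref{theorem1}, the vanishing cycles inside the cusp neighborhood give $\pi_{1}(X \setminus T) = 1$ by the standard argument.

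Once this geometric setup is secured, Theorem~\ref{thm:knotsurgery} applies to any knot $K \subset S^{3}$ and yields a smooth $4$-manifold $X_{K}$ homeomorphic to $X$ (hence to $3\CP\#15\CPb$) with Seiberg--Witten function
\[
SW_{X_{K}} \;=\; SW_{X} \cdot \Delta_{K}(t^{2}), \qquad t = t_{T}.
\]
For the symplectic family I would take a sequence of fibered knots $\{F_n\}_{n \geq 1}$ with pairwise distinct symmetrized Alexander polynomials, for instance the torus knots $T(2,2n+1)$; then each $X_{F_n}$ admits a symplectic structure, and the Alexander polynomials detect pairwise distinct Seiberg--Witten invariants, so the $X_{F_n}$ are pairwise non-diffeomorphic. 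For the non-symplectic family I would take a sequence of non-fibered knots $\{N_{n}\}$ with non-monic Alexander polynomials of pairwise distinct degrees; by Theorem~\ref{thm:knotsurgery} these $X_{N_{n}}$ admit no symplectic structure yet are again pairwise non-diffeomorphic by the same Seiberg--Witten calculation, and are distinct from every $X_{F_{n}}$ by symplecticity.

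Irreducibility of the symplectic members follows from symplectic minimality: the basic classes of $X_{K}$ are of the form $\pm K_X + 2k \cdot \mathrm{PD}(T)$ for integers $k$ in the support of $\Delta_K(t^{2})$, and since $\mathrm{PD}(T)^{2} = 0$ while $K_{X}^{2} > 0$ (as computed in the proof of Theorem~\ref{theorem1}) no two basic classes differ by a class of square $-4$; minimal simply connected symplectic $4$-manifolds with $b_2^+ > 1$ are irreducible by the theorem of Hamilton--Kotschick. For the non-symplectic family, each $X_{N_{n}}$ has more than one Seiberg--Witten basic class, so a hypothetical connected sum decomposition would force one summand to have $b_{2}^{+} = 0$, and together with the list of known exotic small manifolds one rules out a nontrivial splitting, giving irreducibility.

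The main obstacle is the first geometric step: justifying carefully that after the six lantern substitutions the rim torus $T$ can indeed be arranged simultaneously to have trivial intersection with every $-4$ sphere in the six $C_{2}$ configurations \emph{and} to lie in a cusp neighborhood of the ambient $X$ with simply connected complement. Once this is in place, the rest of the argument is a routine application of Theorems~\ref{SW1}, \ref{SW2}, and \ref{thm:knotsurgery} together with the blow-up and connected-sum formulas for Seiberg--Witten invariants.
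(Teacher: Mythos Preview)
Your proposal is correct and follows essentially the same strategy as the paper: locate a rim torus from the fiber-sum description of $K3$ that is disjoint from all six $C_2$ configurations, show it survives in $X$ with simply connected complement, and then run Fintushel--Stern knot surgery with suitably chosen families of knots. The paper devotes most of its proof to precisely the geometric step you flag as the main obstacle (choosing the rim circle $\mu$ away from a neighborhood containing the singular loci), and it obtains $\pi_1(X\setminus T)=1$ via the dual $-2$ sphere of the rim torus together with Van Kampen rather than via vanishing cycles in a cusp neighborhood; one small slip in your write-up is that the nucleus containing the rim torus is not the one ``coming from a cusp fiber of one of the $E(1)$ summands'' but rather one of the two other $N(2)$'s in $E(2)$ generated by a rim torus and its dual $-2$ sphere.
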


\begin{proof} We will use both the branched cover and the fiber sum descriptions of $K3\#2\CPb$\/ given as in Lemma~\ref{E} and Proposition~\ref{E1} to show that $X$\/ contains at least two disjoint tori that are disjoint from the singular fibers of genus two Lefschetz fibration on $K3\#2\CPb$\/ over $S^2$. An alternative proof, using the homology classes, is given in Example~\ref{Ex}. These tori descend from Gompf nucleus of $E(2) = K3$ (See Example~\ref{Ex}), and survive in $X$ after the rational blowdowns along $C_{2}$. Let us explain this more precisely using a geometric argument. First, note that according to the proof of Lemma~\ref{E} any genus two fiber, which arises from a two-fold cover of the sphere $h-e_{1}$, meets the torus $a \times b$ descending from an elliptic fiber of $K3$ at two points. Let $f_{1}, \cdots, f_{6}$ denote the complicated singular fibers of the genus two fibration on $K3\#2\CPb$\/. We perturb these singular fibers into Lefschetz type upon which we perform the rational blowdowns along $C_{2}$. Consider the tubular neighborhoods of these singular fibers in the manifold $K3\#2\CPb$\/. One should think of each neighborhood where we perturb one complicated singular fiber into five Lefschetz type singular fibers. The normal disks of these neighborhoods on the torus $a \times b$ are denoted in Figure~\ref{fig:rimtorus} as $\delta_{1}, \cdots, \delta_{6}$. The dots in the disks indicate there are $5$ singulars fibers over each disk $\delta_{i}$. We choose an open set $U$ on the torus $a \times b$ which contains the disks $\delta_{1}, \cdots, \delta_{6}$, and away from the loops $a$ and $b$ (See Figure~\ref{fig:rimtorus}). Thus, we can assume that the rational blowdown surgeries have no effect on the outside the tubular neighborhood of $U$. Next, we will choose a rim circle $\mu$ away from the tubular neighborhood of $U$. The rim tori that are not affected by six rational blowdowns are $\mu \times a$ and $\mu \times b$ (Figure~\ref{fig:rimtorus}). Note that each of these tori has a dual sphere of self-intersection $-2$, arising from the dual circles $b$ and $a$ (See Example~\ref{Ex}). These tori are Lagrangian, but we can perturb the symplectic form so that one of them, say $T = \mu \times a$ becomes symplectic. Moreover, $\pi_1(X \setminus T) = 1$, which follows from the Van Kampen's Theorem using the facts that $\pi_1(X) = 1$ and the rim torus has a dual sphere (see Proposition 1.2 in \cite{Ha}, or Gompf \cite{gompf}, page 564). Hence, we have a symplectic torus $T$ in $X$\/ of self-intersection $0$ such that $\pi_1(X \setminus T) = 1$. By performing a knot surgery on $T$, inside $X$, we acquire an irreducible $4$-manifold $X_K$ that is homeomorphic to $X$. By varying our choice of the knot $K$, we can realize infinitely many pairwise non-diffeomorphic $4$-manifolds, either symplectic or nonsymplectic.
\end{proof}

\begin{figure}[ht]
\begin{center}
\includegraphics[scale=.33]{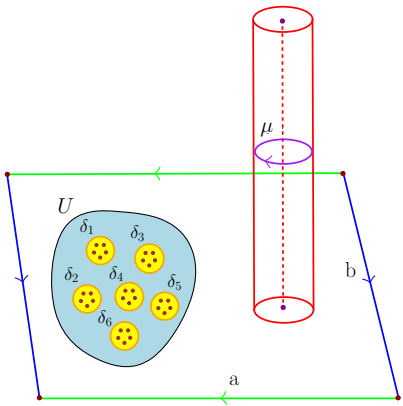}
\caption{The Rim Tori in $X$}
\label{fig:rimtorus}
\end{center}
\end{figure}

\begin{rmk} Let $X(m)$ denote the symplectic $4$-manifold obtained from $K3\#2\CPb$ by applying $m$ copies of the lantern substitutions, performed in the order given as in Lemma~\ref{sixl}, where $1 \leq m \leq 5$. Using Gompf's result~\ref{n=1} and~\ref{n=2}, we observe that $X(1)$ and $X(2)$ are diffeomorphic to $K3\#\CPb$ and $K3$ respectively. Similarly as in Theorem~\ref{theorem1}, we show that $X(m)$ is an exotic copy of $3\CP\# (21-m)\CPb$ for $3 \leq m \leq 5$. Moreover, the knot surgery on torus yields the infinite family of symplectic and non-symplectic irreducible $4$-manifolds all homeomorphic but not diffeomorphic to $3\CP\# (21-m)\CPb$. We leave the details to the reader as an exercise.

\end{rmk}

\section*{Acknowledgments} We would like to thank the referees for providing us with constructive and helpful comments and suggestions on the manuscript. A. Akhmedov was partially supported by Sloan Fellowship, NSF grants FRG-0244663 and DMS-1005741. J. Y. Park was partially supported by Undergraduate Research Opportunities Program Award at University of Minnesota.

\bibliographystyle{mrl}

\end{document}